\theoremstyle{theorem}
\newtheorem{thm}{Theorem}[section]
\newtheorem{lem}[thm]{Lemma}
\newtheorem{cor}[thm]{Corollary}
\newtheorem{prop}[thm]{Proposition}
\theoremstyle{definition}
\newtheorem{defn}[thm]{Definition}
\newtheorem{ques}[thm]{Question}
\newtheorem{rem}[thm]{Remark}
\newtheorem{hypo}[thm]{Hypothesis}
\numberwithin{equation}{section}
\newcommand{\Hom}{\mathrm{Hom}}
\newcommand{\bb}[1]{\mathbb{#1}}
\newcommand{\cu}[1]{\mathcal{#1}}
\newcommand{\til}[1]{\widetilde{#1}}
\newcommand{\tildeL}{\widetilde{L}}
\newcommand{\be}{\begin{equation}}
\newcommand{\ee}{\end{equation}}
\newcommand{\del}{\partial}
\def\bZ{\mathbb{Z}}
\def\bR{\mathbb{R}}
\def\bC{\mathbb{C}}
\def\bP{\mathbb{P}}
\def\bL{\mathbb{L}}
\def\cA{\mathcal{A}}
\def\cM{\mathcal{M}}
\def\cT{\mathcal{T}}
\def\cY{\mathcal{Y}}
\def\cL{\mathcal{L}}
\def\id{\text{\rm id}}
\def\vol{\text{\rm vol}}
\def\ev{\text{\rm ev}}
\def\Hom{\text{\rm Hom}}
\def\Self{\text{\rm Self}}
\def\Aut{\operatorname{Aut}}
\def\Fuk{\mathfrak{Fuk}}
\def\Ev{\operatorname{Ev}}
\def\ev{\operatorname{ev}}
\def\trop{\operatorname{trop}}
\def\PO{\mathfrak{PO}}
\begin{document}
	
	\title[Microlocalization]{Fukaya's immersed Lagrangian Floer theory and
microlocalization of Fukaya category}

	\author[Oh]{Yong-Geun Oh}
\address{Center for Geometry and Physics, Institute for Basic Science (IBS),
79 Jigok-ro 127beon-gil, Nam-gu, Pohang, Gyeongsangbuk-do, Korea 37673
\& POSTECH, 77 Cheongam-ro, Nam-gu, Pohang, Gyeongsangbuk-do, Korea 37673}
\email{yongoh1@postech.ac.kr}

	\author[Suen]{Yat-Hin Suen}
\address{School of Mathematics, Korea Institute for Advanced Study,
    85 Hoegiro Dongdaemun-gu, Seoul, Korea 02455}
	\email{yhsuen@kias.re.kr}

	\date{\today}

\begin{abstract} Let $\Fuk(T^*M)$ be the Fukaya category 
in the Fukaya's immersed Lagrangian Floer theory \cite{fukaya:immersed} which is
generated by immersed Lagrangian submanifolds \emph{with clean self-intersections}.
This category is monoidal in that the product of two such immersed Lagrangian submanifolds 
remains to be a Lagrangian immersion with clean self-intersection. Utilizing this monoidality of
Fukaya's immersed Lagrangian Floer theory,
we prove the following generation result in this Fukaya category of the cotangent bundle,
which is  the counterpart of Nadler's generation result \cite{Nadler}
for the Fukaya category generated by the exact embedded Lagrangian branes. More specifically we
prove that for a given triangulation $\cT = \{\tau_{\mathfrak a}\}$ fine enough the Yoneda module
$$
\mathcal{Y}_{\bL}: = hom_{\Fuk(T^*M)}(\cdot, \bL)
$$
can be expressed as a twisted complex with terms 
$hom_{\Fuk(T^*M)}(\alpha_M(\cdot), L_{\tau_{\mathfrak a}*})$
for any curvature-free (aka tatologically unostructed) object $\bL$. Using this, we also extend Nadler's equivalence theorem
between the dg category $Sh_c(M)$ of constructible sheaves on $M$ and the triangulated envelope of
$\Fuk^0(T^*M)$ to the one over the Novikov field $\mathbb K$.

	\end{abstract}
\keywords{Lagrangian immersions with clear self-intersections, curved filtered Fukaya category, tautologically unobstructed
Lagrangian branes, Nadler's generation result, microlocalization}
\thanks{The first named author is supported by the IBS project \# IBS-R003-D1.}

	\maketitle
	
	\tableofcontents

\section{Introduction}
The notion of tropical Lagrangian multi-sections over a rational fan $\Sigma$ was introduced by Payne in 
\cite{branched_cover_fan}
(See also \cite{Suen_trop_lag}.)  Each tropical Lagrangian multi-section $\bb{L}^{\trop}$ 
determines a Lagrangian subset $\Lambda_{\bb{L}^{\trop}}\subset \Lambda_{\Sigma}\subset T^*M_{\bb{R}}$, where $M_{\bb{R}}$ is dual to the vector sapce where $\Sigma$ lives. 
The following question is of  fundamental interest in the framework of 
 the SYZ mirror symmetry.
	
\begin{ques}[Lagrangian realization problem]\label{ques:existence}
Given a tropical Lagrangian multi-section $\bb{L}^{\trop}$ over a complete fan $\Sigma$, 
is there a  Lagrangian multi-section $\bb{L}$ as an object in the tautologically unobstructed Fukaya category
$\Fuk^0(Y,\Lambda_{\Sigma})$  such that 
$\bb{L}^{\infty}\subset\Lambda_{\bb{L}^{\trop}}^{\infty}$?
\end{ques}
(We refer to Definition \ref{defn:strictification}, \eqref{eq:L-decompose} and Proposition \ref{prop:curvature-free}
in Appendix for the definition of 
$\Fuk^0(Y,\Lambda_{\Sigma})$ which corresponds to $\Fuk_0$ in Proposition \ref{prop:curvature-free}.)
	
 If $\bb{L}^{\trop}$ can be realized by a Lagrangian multi-section $\bb{L}$ in the sense of 
 \cite[Question 1.6]{oh-suen:lag-multi-section}, 
 its mirror is a toric vector bundle $\cu{E}_{\bb{L}}$ whose associated tropical Lagrangian multi-section $\bb{L}_{\cu{E}_{\bb{L}}}^{\trop}$ is nothing but $\bb{L}^{\trop}$. However, in \cite{Suen_trop_lag}, the second-named author gave an example of a 2-fold tropical Lagrangian multi-section over the fan of $\bb{P}^2$ that \emph{does not} arise from toric vector bundles on $\bb{P}^2$. By mirror symmetry, we should not expect it can be realized by an unobstructed Lagrangian submanifold in $Y$. Hence there must be some extra assumptions that need to be put on $\bb{L}^{\trop}$ in order to get an affirmative answer to the realization problem. 
 In \cite{oh-suen:lag-multi-section}, we  introduce the notion of the \emph{$N$-generic condition} 
 \cite[Definition 5.3]{oh-suen:lag-multi-section} and prove the following.
	
\begin{thm}[Theorem 5.15 \cite{oh-suen:lag-multi-section}]
\label{thm:unobs_immersed_Lag-intro}
			Let $\bb{L}^{\trop}$ be a $N$-generic 2-fold tropical Lagrangian multi-section over a complete 2-dimensional fan $\Sigma$ with $N\geq 3$. Then there is a spin, graded and immersed 2-fold Lagrangian multi-section $\bb{L}$ in $Y$, whose immersed sector is concentrated at degree 1 and $\bb{L}^{\infty}\subset\Lambda_{\bb{L}^{\trop}}^{\infty}$. In particular, $\bb{L}$ is tautologically unobstructed. When $\bb{L}$ is embedded, the topology of the underlying surface has Betti numbers $b_0(\bb{L})=1,\,b_1(\bb{L})=N-3,\,b_2(\bb{L})=0$.
	\end{thm}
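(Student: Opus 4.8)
The plan is to build $\bb{L}$ explicitly, chart by chart, from a combinatorial normal form for $\bb{L}^{\trop}$ provided by the $N$-generic condition, and then to read off the topology from Riemann--Hurwitz. First I would invoke the $N$-generic hypothesis (\cite[Definition 5.3]{oh-suen:lag-multi-section}) to pass to a refinement $\widetilde{\Sigma}$ of $\Sigma$ on which $\bb{L}^{\trop}$ is in normal form: over each two-dimensional cone of $\widetilde{\Sigma}$ the multi-section is either two disjoint affine sheets, or a single ``standard branch chart'' in which the covering is ramified over one interior point; the total number of branch charts is $N-2$; and the locus where the differentials of the two sheets agree is dispersed into finitely many isolated points away from the ramification. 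The role of $N$-genericity is precisely to make this normalization possible --- so that the PL singularities of $\bb{L}^{\trop}$ resolve into \emph{isolated simple} ramification points rather than folds along rays or worse degenerations, and so that the residual crossings are isolated --- and this is also what the $\bb{P}^2$ example of \cite{Suen_trop_lag} violates.

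Next I would construct the local Lagrangian model over a branch chart. Over a disk $D\subset M_{\bb{R}}$, writing $w=u+iv$ on the would-be double cover and $z=x_1+ix_2$ on $D$, the assignment $(u,v)\mapsto(u^2-v^2,\,2uv,\,u,\,-v)$ defines an \emph{embedded} Lagrangian disk in $T^*D$ that double-covers $D$ via $z=w^2$ with exactly one branch point; one checks directly that $dp_1\wedge dx_1+dp_2\wedge dx_2$ pulls back to zero. Thus ramification itself costs no self-intersection. The immersed sector instead comes from the residual crossings: near a point where the graphs of the two smoothed sheets meet, a generic Lagrangian perturbation produces a transverse double point, locally modelled on the zero section meeting a cotangent fibre, which is a clean intersection along a point. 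Its Maslov index is determined by the relative grading of the two sheets dictated by $\bb{L}^{\trop}$; I would fix the global grading so that every such double point has index exactly $1$, and then remove any spurious intersections introduced by the perturbation by a further small Hamiltonian isotopy. When the crossings admit a consistent Lagrangian surgery, one performs that instead and obtains an embedded $\bb{L}$.

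Then I would glue. Over the overlap of a branch chart with an adjacent unbranched cone, both pieces are, after a fibrewise translation, graphs of exact $1$-forms agreeing to high order along the common wall, so a partition-of-unity interpolation at the level of primitives keeps the whole family Lagrangian; freezing the interpolation outside a compact set enforces $\bb{L}^{\infty}\subset\Lambda_{\bb{L}^{\trop}}^{\infty}$, and the combinatorial pattern of charts supplied by $N$-genericity lets the branch-swap identifications be chosen with no global monodromy obstruction. Orientability is automatic: a $2$-fold cover interchanges the two sheets around a ramification point but preserves the induced orientation of the total surface, so $\bb{L}$ is orientable, hence spin, while a grading exists because $Y$ has $2c_1=0$. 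Since every self-intersection generator lies in degree $1$ and $H^2$ of the (open) surface vanishes, there is no room for $\mathfrak{m}_0$ in $CF^2(\bb{L},\bb{L})$, so $\bb{L}$ is tautologically unobstructed. Finally, when $\bb{L}$ is embedded it is topologically a branched double cover of $M_{\bb{R}}\cong\bb{R}^2$ with $N-2$ branch points, so Riemann--Hurwitz gives $\chi(\bb{L})=2\cdot1-(N-2)=4-N$; it is connected (as $N-2\ge1$) and non-compact, whence $b_0(\bb{L})=1$, $b_2(\bb{L})=0$, $b_1(\bb{L})=1-(4-N)=N-3$.

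The hard part, where I expect most of the work to go, is the combined local-model-and-gluing step: producing the branch and surgered charts so that the unavoidable self-intersections are transverse of Maslov index exactly $1$ \emph{and} the ends match the tropical model precisely, then splicing all charts without creating new intersections or breaking the grading and spin data. The degree-$1$ requirement is genuinely delicate, since the index of a crossing double point is governed by the relative gradings of the two sheets --- which $\bb{L}^{\trop}$ dictates --- so one must trace through exactly why $N$-genericity allows these gradings to be arranged globally, and why it fails in the $\bb{P}^2$ example; this, with the verification in Fukaya's immersed formalism that no holomorphic disk bubbles into $\mathfrak{m}_0$, is the technical core.
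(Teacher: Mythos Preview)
This theorem is not proved in the present paper. It is quoted in the introduction as Theorem~5.15 of the companion paper \cite{oh-suen:lag-multi-section}, purely to motivate why one needs an immersed Fukaya category with clean self-intersections; the body of this paper is devoted to Theorem~\ref{thm:generation} (Nadler's generation result in the immersed setting), and contains no construction of the Lagrangian multi-section $\bb{L}$. So there is no proof here to compare your proposal against.

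That said, your outline is a reasonable sketch of the kind of argument one expects in \cite{oh-suen:lag-multi-section}: a normal form for $\bb{L}^{\trop}$ from $N$-genericity, local Lagrangian branch models, gluing, and a Riemann--Hurwitz count. Your Euler-characteristic computation is internally consistent with the stated Betti numbers ($\chi=4-N$ forces $N-2$ simple branch points over $\bb{R}^2$). But several points you flag as ``the hard part'' --- why $N$-genericity forces exactly $N-2$ simple branch charts, why the residual crossings can all be arranged in degree~$1$, and why the gluing introduces no extraneous intersections --- are precisely the content of the cited paper, and you would need to consult \cite{oh-suen:lag-multi-section} (especially its Definition~5.3 and the proof of its Theorem~5.15) to see whether your local model $(u,v)\mapsto(u^2-v^2,2uv,u,-v)$ and your grading argument match what is actually done there. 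A comparison with the present paper is not possible because it simply does not contain the proof.
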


To ensure the Lagrangian immersion constructed in Theorem 1.2 admits a mirror object in $\cu{P}erf_T(X_{\Sigma})$, 
the Fang-Liu-Trenman-Zaslow \emph{coherent-constructible correspondence} 
and the Nadler-Zaslow's
\emph{microlocalization
functor} play an important role. In the proof of this equivalence statement, the Lagrangian correspondence and
the representation of Yoneda module $\cY(L)$ of a Lagrangian brane in $T^*M$ by a twisted complex of standard
branes of simplicies of triangulation of $M$ play a crucial role. Moreover in the latter representation,
the study of K\"unneth-type functors is essential. 

Therefore one would like the considered Fukaya category to be \emph{monoidal} so that taking 
the tensor product makes sense therein. However the Fukaya category of immersed Lagrangians cannot be monoidal
if one imposes the transversality of self-intersection set of Lagrangian immersions as in \cite{akaho-joyce}.
This is because the product immersion $L_1 \times L_2$ \emph{never} satisfies the transversality
on its self-intersection set even when $L_i$ are assumed to be transversal, unless both of them
are embedded:
The self-intersection set of $L_1 \times L_2$ is given by
$$
(\Self(L_1) \times L_2) \bigcup (L_1 \times \Self(L_2))
$$
which is a continuum. However this self-intersection set is of \emph{clean intersection}. Here crucially enters
Fukaya's enlarged category \cite{fukaya:immersed} of immersed Lagrangian Floer theory since it 
is monoidal under the operation of taking transversal intersection of two immersed Lagrangians in
his sense \cite{fukaya:immersed}, but not in the sense of \cite{akaho-joyce}. 
This is the reason why one needs to enhance the immersed Lagrangian Floer theory of Akaho-Joyce
\cite{akaho-joyce} to Fukaya's \cite{fukaya:immersed}. From now on, unless otherwise
explicitly said, we will always mean an immersed Lagrangian one with clean self-intersections
in the sense of \cite{fukaya:immersed}.

This is precisely what we need in our previous paper \cite{oh-suen:lag-multi-section}
for the purpose of extending the aforementioned  \emph{microlocalization functor}
to the immersed Lagrangian Floer theory. An explicit outcome of these considerations is an
extension of Nadler's generation result from \cite{Nadler}.

The following is the main generation result which is 
the counterpart of \cite[Theorem 4.1.1]{Nadler}.

\begin{thm}\label{thm:generation} Let $\Fuk (T^*M)$ be the Fukaya category over the Novikov field generated by
weakly unobstructed immersed Lagrangian branes with clear self-intersections, and let $L$ be an object of 
the subcategory
$$
\Fuk^0(T^*M) \subset \Fuk(T^*M).
$$
The Yoneda module
$$
\mathcal{Y}_L: = hom_{\Fuk(T^*M)}(\cdot, L)
$$
can be expressed as a twisted complex with terms $hom_{\Fuk(T^*M)}(\alpha_M(\cdot), L_{\tau_{\mathfrak a}*})$.
\end{thm}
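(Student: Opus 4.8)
The plan is to follow the strategy of Nadler's proof of \cite[Theorem 4.1.1]{Nadler}, adapting each step to the immersed (clean self-intersection) setting provided by Fukaya's theory \cite{fukaya:immersed}. First I would fix a triangulation $\cT = \{\tau_{\mathfrak a}\}$ of $M$ which is fine enough relative to the (conical) singular support of $L$; since $L \in \Fuk^0(T^*M)$ is a curvature-free brane, its wrapping/perturbation behaves well and we may assume $L$ is in generic position with respect to the conormal varieties $N^*\tau_{\mathfrak a}$ and with respect to all the iterated products of standard branes $L_{\tau_{\mathfrak a}*}$ that will appear. The standard (costandard) branes $L_{\tau_{\mathfrak a}*}$ attached to the simplices, together with the open-inclusion/restriction maps between them, form a diagram whose totalization (as an iterated cone / twisted complex) one wants to identify with the diagonal, or more precisely with the Yoneda module $\cY_L$. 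The key point is that in $\Fuk(T^*M)$ these objects and the structure maps make sense because the relevant products $L \times L_{\tau_{\mathfrak a}*}$ are Lagrangian immersions \emph{with clean self-intersection}, hence legitimate objects of Fukaya's enlarged category even though they fail the transversality hypothesis of \cite{akaho-joyce}.

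The main steps, in order, are: (1) Construct the twisted complex. For each simplex $\tau_{\mathfrak a}$ set up the standard brane $L_{\tau_{\mathfrak a}*}$ and the morphisms $hom_{\Fuk(T^*M)}(\alpha_M(\cdot), L_{\tau_{\mathfrak a}*})$, where $\alpha_M$ is the appropriate wrapping/embedding functor; assemble these into a twisted complex $C^\bullet$ using the simplicial boundary maps, checking the Maurer--Cartan equation $\sum (-1)^? m_k(\ldots) = 0$ for the connecting data. This is where one must invoke the monoidality: the higher products $m_k$ are computed via moduli of holomorphic polygons with boundary on products of immersed Lagrangians, and one needs these products to remain clean-immersed so that Fukaya's Floer package (energy estimates, transversality via Kuranishi/virtual techniques, and the $A_\infty$ relations) applies. (2) Produce a closed degree-zero morphism $\Phi : C^\bullet \to \cY_L$ (or $\cY_L \to C^\bullet$). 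The natural candidate is an evaluation/cosheaf-type map: on a test object $X$, it sends a cycle representing a section over the simplices to its ``value'' in $hom(X,L)$, obtained by gluing the local pictures near each simplex using a partition-of-unity-type argument at the level of the Fukaya category (homotopy colimit). (3) Prove $\Phi$ is a quasi-isomorphism. Since $\Fuk^0(T^*M)$ is split-generated by the cotangent fibers (standard branes of top simplices), it suffices to check $\Phi$ induces an isomorphism on $H^*(hom(X, -))$ for $X$ ranging over a generating set, which reduces to a local (Morse-theoretic / microlocal) computation near each simplex: the contribution of $L_{\tau_{\mathfrak a}*}$ to $hom(X,L)$ matches the stalk of the constructible sheaf attached to $L$ at an interior point of $\tau_{\mathfrak a}$, and the twisted differential reproduces the simplicial (co)chain complex computing that stalk cohomology --- this is exactly Nadler's local calculation, now carried out with immersed test objects allowed.

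The hard part will be Step (1) together with the clean-intersection Floer theory input underlying it: verifying that all the structure constants of the twisted complex are well-defined and satisfy the $A_\infty$/Maurer--Cartan relations when the relevant Lagrangians are only clean-immersed (not transversally immersed), i.e. importing from \cite{fukaya:immersed} the compactness, gluing, and virtual-perturbation statements for moduli of holomorphic polygons whose boundary Lagrangians include continua of self-intersection such as $(\Self(L_1)\times L_2)\cup(L_1\times\Self(L_2))$, and confirming that the degree shifts/signs coming from the clean intersection loci (Morse--Bott families) are compatible with the simplicial signs. A secondary technical obstacle is the homotopy-coherence of the gluing map $\Phi$: making precise the ``partition of unity'' on the Fukaya-categorical side so that the local quasi-isomorphisms of Step (3) assemble into a global one requires a Mayer--Vietoris / descent argument for $\Fuk(T^*M)$, for which one leans on the constructible-sheaf side via the (extended) microlocalization functor. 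Once these are in place, the passage to the Novikov field $\mathbb{K}$ is formal: curvature-freeness of $L$ guarantees the relevant Floer complexes are defined over $\mathbb{K}$ without convergence issues, and the twisted complex is finite because the triangulation is finite.
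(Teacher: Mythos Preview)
Your proposal does not actually follow Nadler's strategy, and the route you describe has a real gap. The paper's argument (and Nadler's) does \emph{not} build the twisted complex directly via simplicial boundary maps and then produce a comparison map by a ``partition of unity'' on the Fukaya side. Instead, the key maneuver is the \emph{diagonal trick}: one first uses the Lagrangian correspondence $L_{\Delta_M} \subset T^*M \times T^*M$ to rewrite, functorially in the test object $\bP$,
\[
hom_{\Fuk(T^*M)}(\bP,\bL) \;\cong\; hom_{\Fuk(T^*M\times T^*M)}\bigl(\alpha_M(\bL)\times \bP,\; L_{\Delta_M}\bigr),
\]
then resolves the \emph{diagonal brane} $\alpha_M(L_{\Delta_M})$ as a twisted complex of standard branes $L_{\Delta_{\tau_{\mathfrak a}}*}$ (this is a sheaf-theoretic statement, coming from expressing $\bC_{\Delta_M}$ in terms of the $d_{\mathfrak a *}\bC_{\Delta_{\tau_{\mathfrak a}}}$ and microlocalizing), and finally uses the K\"unneth quasi-isomorphism to split each resulting term as a tensor product with one factor independent of $\bP$. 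The monoidality of Fukaya's clean-immersed category is needed precisely here: to make sense of $\alpha_M(\bL)\times \bP$ as an object and to have the K\"unneth comparison available when $\bL$, $\bP$ are immersed.

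What goes wrong in your outline: the ``partition of unity / homotopy colimit'' step you flag as secondary is in fact the whole difficulty, and the diagonal trick is exactly the device that replaces it. Your Step~(3) also risks circularity: you propose to check the quasi-isomorphism on a generating set of cotangent fibers, but generation of $\Fuk^0(T^*M)$ by fibers (or by standards) is essentially what the theorem asserts. By contrast, in the paper's approach the functoriality in $\bP$ is automatic from the correspondence formalism, so no separate gluing or descent argument is required, and the only Floer-theoretic inputs beyond Fukaya's package are the K\"unneth theorem (Amorim/Fukaya) and the ``moving the diagonal'' computation, both of which are cited rather than reproved.
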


The proof of this theorem is outlined in 
\cite[Appendix A]{oh-suen:lag-multi-section}. The main purpose of the present paper is
to provide full details of the proof as promised therein.

\section{Immersed Lagrangian branes with clean self-intersections}

In this section, we briefly recall Fukaya's setting of immersed Lagrangian Floer theory
developed in \cite{fukaya:immersed}. Three key points of Fukaya's treatment therein,
which are different from that of Akaho-Joyce's \cite{akaho-joyce}, lies in the following
points:
\begin{enumerate}
\item Immersed Lagrangian submanifolds are allowed to have \emph{clean self-intersections}.
\item It uses the de Rham model and $\bb{Z}_2$ local systems for the underling vector spaces.
\item It also constructs a full Fukaya category generated by such Lagrangian immersions and
relevant functor categories and bimodules employing the calculus of Lagrangian correspondences
\cite{WW:functoriality,WW:composition}.
\end{enumerate}
The point (1) is needed for the relevant Fukaya category to be monoidal,
because it is inevitable to consider the product operation when one tries to define the 
K\"unneth-type functors.

\subsection{Immersed Lagrangian submanifolds with clean self-intersections}

Let $(X,\omega)$ be a symplectic manifold of dimension $2n$. We assume it is either
compact or tame, i.e. it admits a Riemannian metric of bounded geometry $g$ and 
a $(\omega,\mathfrak T)$-tame almost complex structure $J$ such that the metric $g_J: = \omega(\cdot, J \cdot)$ satisfies
$$
\frac1C g \leq g_J \leq C g
$$
for some $C > 0$, where $\mathfrak T$ is the quasi-isometry class of the metric $g$.  
We assume that $J$ is at least $C^3$ $(\omega,\mathfrak T)$-tame.
(See \cite{choi-oh:tameJ}  for the definition of the $(\omega,\mathfrak T)$-tameness and for the 
reason for the necessity of the condition $C^3$-tameness.)

We recall the precise definition of immersed Lagrangian submanifolds used from
\cite{fukaya:immersed} with slight variation of notations.

\begin{defn}[Definition 3.1 \cite{fukaya:immersed}] An \emph{immersed Lagrangian submanifold} $\bb{L}$
of $(X,\omega)$ is a pair $(\tildeL,i_L)$ where $\tildeL$ is a smooth manifold of dimension $n$
such that $i_L$ is an immersion. We denote by $L$ its image and $\bb{L} = (\tildeL,i_L)$.
\begin{enumerate}
\item We say $\bb{L}$ has \emph{clean self-intersection} if the following holds:
\begin{enumerate}
\item The fiber product
$$
\tildeL \times_X \tildeL = \{(p,q) \in \tildeL \times \tildeL \mid i_L(p) = i_L(q)\}
$$
is a smooth submanifold of $\tildeL \times \tildeL$.
\item For each $(p,q) \in \tildeL \times_X \tildeL$, we have
$$
T_{(p,q)}(\tildeL \times_X \tildeL) = \{(V,W) \in T_p \tildeL \times T_q \tildeL
\mid d_pi_L(V) = d_qi_L(W)\}.
$$
\end{enumerate}
\end{enumerate}
\end{defn}

Following \cite{fukaya:immersed}, we put
$$
L(+) = \tildeL \times_X \tildeL.
$$
Then we have the decomposition
\be\label{eq:L+=}
L(+) \cong \tildeL \sqcup \coprod_{a \in \cA_s(L)} L(a)
\ee
where we identify $\tildeL \cong L(+) \cap \Delta_X$ and 
$$
L(+) \setminus L(+) \cap \Delta_X =  \coprod_{a \in \cA_s(L)} L(a)
$$
where each $L(a)$ is a connected component of $L(+) \setminus L(+) \cap \Delta_X$.

\begin{defn} We call $\tildeL \subset L(+)$ the diagonal component and each $L(a)$
a switching component. We index them by the set
$$
\cA(L) = \{o\} \cup \cA_s(L), \quad L(o): = \tildeL.
$$
We say $\cL$ has  \emph{transversal self-intersection} if $L$ has a clean self-intersection and all
switching components are zero dimensional.
\end{defn}

With these definitions,  we state the following key lemma from \cite{fukaya:immersed}.

\begin{lem}[Lemma 4.6 \cite{fukaya:immersed}]\label{lem:fiber-product} Let $L_1$, $L_{12}$ be immersed 
submanifold of $X_1$ and $-X_1 \times X_2$ respectively. Assume that
$L_1 = (\tildeL_1, i_{L_1})$ has clean transformation by $L_{12} = (\tildeL_{12},i_{L_{12}})$
and denote by $L_2 = (\tildeL_2,i_{L_2})$ the associated geometric transformation. Then 
the fiber product
$$
(\tildeL_1 \times \tildeL_2) \times_{X_1 \times X_2}(\tildeL_{12})
$$
is diffeomorphic to $\tildeL_1 \times_{X_2} \tildeL_2$.
\end{lem}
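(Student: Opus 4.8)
The plan is to deduce the stated diffeomorphism from nothing more than a re-bracketing of an iterated fiber product, once $\tildeL_2$ has been identified with the clean fiber product defining the geometric transformation. Recall from \cite{fukaya:immersed} that this transformation is
$$
\tildeL_2 \;=\; \tildeL_1 \times_{X_1}\tildeL_{12}
\;=\;\{(a,c)\in \tildeL_1\times \tildeL_{12}\mid i_{L_1}(a)=j_1(c)\},\qquad
i_{L_2}(a,c)=j_2(c),
$$
where $j_1=\mathrm{pr}_{X_1}\circ i_{L_{12}}$ and $j_2=\mathrm{pr}_{X_2}\circ i_{L_{12}}$, and that the hypothesis ``$L_1$ has clean transformation by $L_{12}$'' makes this a \emph{clean} fiber product: a smooth submanifold of $\tildeL_1\times\tildeL_{12}$ whose tangent space at each point is the fiber product of the tangent spaces. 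In particular the right-hand side of the statement is to be read as the self-fiber-product $\tildeL_2\times_{X_2}\tildeL_2$ formed with respect to $i_{L_2}$.

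Substituting this presentation of $\tildeL_2$, I would first check that $(\tildeL_1\times\tildeL_2)\times_{X_1\times X_2}\tildeL_{12}$ is the locus of quadruples $(a',a,c,c')$, with $a',a\in\tildeL_1$ and $c,c'\in\tildeL_{12}$, satisfying
$$
i_{L_1}(a')=j_1(c'),\qquad i_{L_1}(a)=j_1(c),\qquad j_2(c)=j_2(c'),
$$
the first equation expressing that the $X_1$-components of $i_{L_1}\times i_{L_2}$ and $i_{L_{12}}$ agree, the third that the $X_2$-components agree, and the second that $(a,c)\in\tildeL_2$; and that $\tildeL_2\times_{X_2}\tildeL_2$ is cut out by exactly these same three equations after relabelling. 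Hence $\bigl((a',(a,c)),c'\bigr)\mapsto\bigl((a',c'),(a,c)\bigr)$ is a bijection between the two loci. It is the restriction of the obvious factor-reordering diffeomorphism between the ambient products, so, both loci being clean fiber products and therefore carrying their induced submanifold structures, this restriction is itself a diffeomorphism. (Equivalently, one verifies that this reordering carries the tangent space of the left-hand side, which by cleanness equals $\{(v',v,w,w')\mid di_{L_1}(v')=dj_1(w'),\ di_{L_1}(v)=dj_1(w),\ dj_2(w)=dj_2(w')\}$, isomorphically onto the corresponding tangent space of $\tildeL_2\times_{X_2}\tildeL_2$.)

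The combinatorial step is essentially free; the content of the lemma, and the only place where care is needed, is the differential-topological bookkeeping that makes both sides smooth manifolds, i.e.\ that each fiber product occurring above --- $\tildeL_1\times_{X_1}\tildeL_{12}$, then $(\tildeL_1\times\tildeL_2)\times_{X_1\times X_2}\tildeL_{12}$, and finally $\tildeL_2\times_{X_2}\tildeL_2$ --- is clean. Cleanness of $\tildeL_1\times_{X_1}\tildeL_{12}$ is precisely the clean-transformation hypothesis; what remains is to see that cleanness propagates, which I would obtain from the general facts that cleanness of a fiber product is stable under taking a product with a fixed manifold and under the associativity isomorphism for iterated fiber products. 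Thus the cleanness of the ``triple'' fiber product $(\tildeL_1\times\tildeL_2)\times_{X_1\times X_2}\tildeL_{12}$ --- and hence, through the identification just made, the clean self-intersection of $L_2$ itself --- is already forced by the assumption that $L_1$ has clean transformation by $L_{12}$. This stability of cleanness under iteration, rather than the re-bracketing, is where the real substance lies, and it is exactly what the notion of clean transformation is engineered to provide.
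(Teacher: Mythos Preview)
The paper does not prove this lemma; it is quoted from \cite{fukaya:immersed} (Lemma~4.6 there) without argument and used only as input for Corollary~\ref{cor:correspondence}. There is therefore no proof in the present paper to compare your attempt against.

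That said, your argument is correct and is essentially the standard one. You rightly observe that the right-hand side as printed, $\tildeL_1\times_{X_2}\tildeL_2$, does not type-check (there is no natural map $\tildeL_1\to X_2$) and that the intended conclusion is the self-fiber-product $\tildeL_2\times_{X_2}\tildeL_2=L_2(+)$; this is indeed what Fukaya's Lemma~4.6 asserts, and the point of the lemma is precisely to identify the clean self-intersection locus of the transformed Lagrangian $L_2$ with the intersection locus of $L_1\times L_2$ and $L_{12}$ inside $X_1\times X_2$. Once $\tildeL_2$ is unpacked as $\tildeL_1\times_{X_1}\tildeL_{12}$, both sides are cut out by the same three equations in the ambient product $\tildeL_1\times\tildeL_1\times\tildeL_{12}\times\tildeL_{12}$, and your bijection is the restriction of a coordinate reordering; since cleanness guarantees that each side inherits its tangent space from the ambient product, this restriction is automatically a diffeomorphism. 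Your closing remark that the real content lies in the propagation of cleanness through the iterated fiber products, and that this is exactly what the clean-transformation hypothesis is designed to ensure, is also on point.
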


An immediate corollary of this applied to $L_{12} = \Delta_X$ in $-X \times X$ is
the following.
\begin{cor}\label{cor:correspondence} 
Let $P$ and $L$ be Lagrangian immersions with clean self-intersections.
Then 
$$
(\widetilde P \times \tildeL) \times_{X \times X}(\Delta_X)
\cong \widetilde P \times_X \tildeL.
$$
\end{cor}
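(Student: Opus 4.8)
The plan is to read Corollary~\ref{cor:correspondence} as the special case of Lemma~\ref{lem:fiber-product} in which the correspondence is the diagonal, $L_{12}=\Delta_X\subset -X\times X$ with $X_1=X_2=X$; because $\Delta_X$ is an \emph{embedded} Lagrangian submanifold the whole argument reduces to an explicit identification of preimages. First I would fix notation: write $i_{\Delta_X}\colon X\to X\times X,\ x\mapsto(x,x)$, for the diagonal embedding and $i_P\times i_L\colon \widetilde P\times\widetilde L\to X\times X,\ (p,q)\mapsto(i_P(p),i_L(q))$, for the product immersion. Unwinding the definition of the fibre product gives
\[
(\widetilde P\times\widetilde L)\times_{X\times X}(\Delta_X)=\{\,((p,q),x)\in(\widetilde P\times\widetilde L)\times X \mid i_P(p)=x=i_L(q)\,\},
\]
so the map $\Phi\colon((p,q),x)\mapsto(p,q)$ is a bijection onto $\widetilde P\times_X\widetilde L=\{(p,q)\mid i_P(p)=i_L(q)\}$, with inverse $(p,q)\mapsto((p,q),i_P(p))$. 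This is precisely the set-level identification that underlies the proof of Lemma~\ref{lem:fiber-product} once $L_{12}$ is taken to be $\Delta_X$.

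Next I would promote $\Phi$ to a diffeomorphism. Since $i_{\Delta_X}$ is an embedding, the left-hand fibre product is canonically the preimage $(i_P\times i_L)^{-1}(\Delta_X)\subset\widetilde P\times\widetilde L$, and the clean-intersection hypothesis required by Lemma~\ref{lem:fiber-product} — which in this diagonal case is exactly the condition that the immersions $i_P$ and $i_L$ meet cleanly — makes this preimage a smooth submanifold with
\[
T_{(p,q)}\bigl((i_P\times i_L)^{-1}(\Delta_X)\bigr)=\{\,(V,W)\in T_p\widetilde P\times T_q\widetilde L \mid (d_pi_P(V),d_qi_L(W))\in\Image(d_xi_{\Delta_X})\,\},
\]
where $x=i_P(p)=i_L(q)$. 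As $d_xi_{\Delta_X}(v)=(v,v)$, membership in $\Image(d_xi_{\Delta_X})$ is equivalent to $d_pi_P(V)=d_qi_L(W)$, i.e.\ to $(V,W)\in T_{(p,q)}(\widetilde P\times_X\widetilde L)$; hence $d\Phi$ is a linear isomorphism at every point and $\Phi$ is a diffeomorphism.

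I do not expect a genuine obstacle: the content is bookkeeping, and the two points to verify are (i) that the clean-transformation hypothesis of Lemma~\ref{lem:fiber-product} specializes, for $L_{12}=\Delta_X$, to cleanness of the intersection of $i_P$ and $i_L$, and (ii) that the two tangent-space descriptions match under $d\,i_{\Delta_X}(v)=(v,v)$ — both immediate because $\Delta_X$ is embedded. It is worth recording alongside this that $\widetilde P\times\widetilde L$ is itself an immersed Lagrangian with clean self-intersection in $-X\times X$, with self-intersection locus $(\widetilde P\times_X\widetilde P)\times(\widetilde L\times_X\widetilde L)$, so that forming the fibre product with $\Delta_X$ is a legitimate operation inside Fukaya's framework; this is the monoidality emphasised in the introduction, and it is what makes Corollary~\ref{cor:correspondence} the natural input for the K\"unneth-type constructions appearing later.
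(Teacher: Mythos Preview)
Your proposal is correct and follows essentially the same route as the paper: both specialize Lemma~\ref{lem:fiber-product} to the diagonal correspondence $L_{12}=\Delta_X$, with the paper simply citing the lemma after noting that composing with $\Delta_X$ acts as the identity, while you unpack the bijection $\Phi$ and the tangent-space identification explicitly. The extra detail you provide is harmless and arguably clearer than the paper's terse invocation.
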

\begin{proof} We consider the submanifold $L \subset X$
and take the composition 
$$
L \circ \Delta_X = L \times_{X}\Delta_X = L( = : L_2).
$$
 Now we apply Lemma \ref{lem:fiber-product} to 
$$
L_1 = P, \, L_{12} = \Delta_X
$$
we obtain
$$
(\widetilde P \times \tildeL) \times_{X \times X}(\Delta_X)
\cong \widetilde P \times_X \tildeL.
$$
\end{proof}

\subsection{Brane data}

\subsubsection{Relative spin structures}

As usual, we consider the relative spin structure for the pair $(X,L)$.  More precisely, we 
start with the following definition.

\begin{defn}[Relative triangulation] Let $\bL=(\widetilde L,i_L)$ be an immersed
Lagrangian submanifold of clean self-intersection. We call a triangulation of the pair $(X,L)$
a \emph{relative triangulation} if it  satisfies the following:
\begin{enumerate}
\item Each component $L(a) \subset L$ is a subcomplex of $L$.
\item  $L$ is a subcomplex of $X$.
\end{enumerate}
\end{defn}

Then we recall the following definition from \cite{fukaya:immersed}.

\begin{defn} Let $\bL = (\widetilde L,i_L)$ be as above and assume that a
relative triangulation of $(X,L)$ is given. A \emph{relative spin structure} of $\bL$ in $X$
consists of the following data:
\begin{enumerate}
\item A real vector bundle $V$ on the 3-skeleton $X_{[3]}$ of $X$.
\item A spin structure $\sigma$ of the bundle $i_L^*V \oplus T\widetilde L$ on the 3-skeleton 
$\widetilde L_{[3]}$ of $\widetilde L$.
\item The map  $i_L: \widetilde L \to L$ is a simplicial map with respect to the given triangulations of
$\widetilde L$ and $L \subset X$.
\end{enumerate}
We put $[\text{\rm st}]= w^2(V) \in H^2(X;\bZ_2)$ and call it the \emph{background class}.
\end{defn}

\subsubsection{$\bZ_2$-local systems}

Because of the usage of clean intersections which is indeed essential to study the K\"unneth-type
functor in the immersed Lagrangian Floer theory as shown in \cite{fukaya:immersed}, we need to
equip a $\bZ_2$-local system \emph{on the switching components} which is denoted by
\be\label{eq:Z2localsystem}
\Theta_a^-
\ee
therein. (See \cite[Section 3.1]{fukaya:immersed} for the details.)

\subsubsection{Grading}

Now we specialize to the setting cotangent bundles.
	Recall that a \emph{grading} on an oriented Lagrangian immersion $i:\til{L}\to Y$ is defined to be a choice of smooth function $\theta_{\bb{L}}:\til{L}\to\bb{R}$ so that
	$$\iota^*\Omega=e^{\sqrt{-1}\theta_{\bb{L}}}{\vol}_{\til{L}},$$
	where ${\vol}_{\til{L}}$ is a volume form on $\til{L}$ with $|{\vol}_{\til{L}}|=1$ (with respective to the induced metric).
	
	Let $\bb{L}_1,\bb{L}_2$ be two graded Lagrangian immersions so that their images $L_1,L_2$ intersect transversally away from their self-intersection points. The \emph{degree} of an intersection point $p\in L_1\cap L_2$ is defined as follows. As $Y\cong\bb{C}^n$ as Calabi-Yau manifolds, there is a $U(n)$ matrix $\psi_p:\bb{C}^n\to \bb{C}^n$ so that
	\begin{align*}
		\psi_p(T_pL_1)=&\,\{(x_1,\dots,x_n)\in\bb{C}^n:x_i\in\bb{R}\},\\
		\psi_p(T_pL_2)=&\,\{(e^{\sqrt{-1}\theta_1}x_1,\dots,e^{\sqrt{-1}\theta_n}x_n)\in\bb{C}^n:x_i\in\bb{R}\},
	\end{align*}
	for some $\theta_i\in(0,\pi)$. We call these angles the \emph{angles of intersection at $p$}. The degree of $p$ from $\bb{L}_1$ to $\bb{L}_2$ is defined to be
	$$
	\deg_{\bb{L}_1,\bb{L}_2}(p):=\frac{1}{\pi}(\theta_1+\cdots+\theta_n+\theta_{\bb{L}_1}(p)-\theta_{\bb{L}_2}(p))\in\bb{Z}.
	$$
	We have $\deg_{\bb{L}_1,\bb{L}_2}(p)+\deg_{\bb{L}_2,\bb{L}_1}(p)=n$.

\section{Immersed Lagrangian filtered Fukaya category}

For a given local system $\Theta$, we denote by $\Omega(X;\Theta)$ the $\bR$ vector spacer of smooth
differential forms on $X$ with local coefficient $\Theta$.

\begin{defn}\label{defn:CFL} Let $R$ be $\bR$ or $\bC$. We consider $\Lambda_0^R$ module
\begin{eqnarray}\label{eq:complex}
CF(L;\Lambda_0^R) &: = & \Omega(L(+); \Theta) \widehat \otimes_R \Lambda_0^R \nonumber\\
& = & \left(\Omega(\tildeL) \widehat \otimes_R \Lambda_0^R\right) 
\bigoplus \left(\Omega(L(a); \Theta_a^-) \widehat\otimes_R \Lambda_0^R\right).
\end{eqnarray}
Here $\widehat \otimes$ is the $T$-adic completion of the algebraic tensor product $\otimes$.
\end{defn}

The following theorem constructs a filtered $A_\infty$ algebra 
$$
\left(CF(L;\Lambda_0^R),\{\mathfrak{m}_k\}_{k \geq 0}\right).
$$
\begin{thm}[Theorem 3.14 \cite{fukaya:immersed}] Let $L$ be as above. Suppose that $L$ has
clean self-intersection. Then there is a natural procedure of associating the structure of 
a filtered $A_\infty$ algebra on the completed free graded $\Lambda_0^R$ module $CF(L;\Lambda_0^R)$.
It is unital and is $G$-gapped for a discrete submonoid $G$.
\end{thm}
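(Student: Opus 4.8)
The plan is to construct the filtered $A_\infty$ operations $\{\mathfrak{m}_k\}_{k\geq 0}$ on $CF(L;\Lambda_0^R)$ by counting pseudoholomorphic polygons with boundary on $\bL$ and \emph{switching corners} along the components $L(a)$, following the scheme of \cite{fukaya:immersed} and adapting the Fukaya--Oh--Ohta--Ono virtual machinery in its de Rham incarnation. First I would set up, for each homotopy class $\beta$ and each tuple $(a_0,a_1,\ldots,a_k)\in\cA(L)^{k+1}$, the moduli space $\cM_{k+1}(\beta;a_1,\ldots,a_k;a_0)$ of stable maps $u:(\Sigma,\partial\Sigma)\to(X,L)$ from a genus-zero bordered nodal Riemann surface with $k+1$ cyclically ordered boundary marked points $z_0,z_1,\ldots,z_k$, where at $z_i$ the two adjacent boundary lifts to $\tildeL$ are required to meet along $L(a_i)$ (with $a_i=o$ meaning the boundary lifts continuously through $z_i$). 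These carry evaluation maps $\ev_i:\cM_{k+1}(\beta;a_1,\ldots,a_k;a_0)\to L(a_i)$. Gromov compactness shows that the set of energies $\omega(\beta)$ carrying nonempty moduli spaces generates a discrete submonoid $G\subset\bR_{\geq 0}$, which is exactly what yields $G$-gappedness.

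Next I would equip these moduli spaces with Kuranishi structures with corners, oriented using the relative spin structure $\sigma$ and twisted by the $\bZ_2$-local systems $\Theta_a^-$ on the switching components, and then choose a system of CF-perturbations (continuous families of multisections) making each $\ev_0$ strongly submersive, so that integration along the fiber $(\ev_0)_!$ of smooth differential forms is well-defined on the perturbed zero locus. The operations are then
$$
\mathfrak{m}_k(x_1,\ldots,x_k)=\sum_{\beta}\sum_{a_0,\ldots,a_k}T^{\omega(\beta)}\,(\ev_0)_!\big(\ev_1^*x_1\wedge\cdots\wedge\ev_k^*x_k\big),
$$
with $\mathfrak{m}_0$ and $\mathfrak{m}_1$ receiving in addition the contributions of constant and one-marked-point configurations and the de Rham differential on $\Omega(L(+);\Theta)$. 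The structural input that makes everything run is a precise description of the codimension-one boundary of $\cM_{k+1}(\beta;a_1,\ldots,a_k;a_0)$ as a union of fiber products
$$
\cM_{k_1+1}(\beta_1;\ldots)\ \times_{L(a)}\ \cM_{k_2+1}(\beta_2;\ldots)
$$
over all splittings $\beta=\beta_1+\beta_2$, $k_1+k_2=k+1$, and all $a\in\cA(L)$ labelling the component along which the two configurations are glued; here the clean-intersection hypothesis is precisely what guarantees that these fiber products are again smooth (compare Lemma \ref{lem:fiber-product} and Corollary \ref{cor:correspondence}) and that the Kuranishi structures and perturbations restrict compatibly.

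With the perturbations chosen compatibly with this boundary decomposition --- an induction on the energy $\omega(\beta)$ and on $k$ --- Stokes' theorem for integration along the fiber together with the boundary description yields the filtered $A_\infty$ relations, where the orientation conventions and the local systems $\Theta_a^-$ are exactly matched so that the signs work out. Unitality, with unit the constant function $\mathbf 1\in\Omega^0(\tildeL)\subset CF(L;\Lambda_0^R)$, follows from the forgetful map that drops the marked point carrying $\mathbf 1$: it forces $\mathfrak{m}_k(\ldots,\mathbf 1,\ldots)=0$ for $k\neq 2$ and $\mathfrak{m}_2(\mathbf 1,x)=\pm x=\pm\mathfrak{m}_2(x,\mathbf 1)$. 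I expect the main obstacle to be the combination of the second and third steps: constructing the Kuranishi structures on the switching moduli spaces and, above all, arranging a globally consistent system of CF-perturbations compatible with the fiber-product boundary strata indexed by $\cA(L)$. The clean (rather than transversal) nature of the self-intersections is what forces the corners and the gluing to be treated in families over the positive-dimensional components $L(a)$, and propagating the compatibility of orientations and of the $\bZ_2$-local systems through every such gluing is the delicate bookkeeping on which the whole construction rests.
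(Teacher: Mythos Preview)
Your proposal is correct and follows essentially the same approach as the paper: the paper does not give an independent proof of this statement but cites \cite{fukaya:immersed} and then summarizes Fukaya's construction of the moduli spaces $\cM(L;\vec a;E)$ with switching boundary data, their evaluation maps, and their stable-map compactifications, which is exactly the scheme you outline. Your sketch of the Kuranishi/CF-perturbation machinery, the boundary fiber-product decomposition yielding the $A_\infty$ relations, the role of the $\bZ_2$-local systems $\Theta_a^-$ for orientations, and unitality via the forgetful map all match the construction referred to and partially summarized in the paper.
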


We refer readers to \cite{fukaya:immersed} for complete details of the relevant
constructions needed for the proof of this theorem, which generalizes the definition from 
\cite{akaho-joyce} to the immersed Lagrangian submanifolds with clean self-intersections.
Here we just summarize his construction to the level of our usage in our proving 
the Nadler's generation result in this enlarged setting of  immersed Lagrangian Floer theory
in the cotangent bundle later.

\subsection{Moduli spaces and evaluation maps}

In this section, we summarize the definitions of the relevant moduli spaces of 
pseuodholomorphic maps and their stable map compactification from \cite[Section 3]{fukaya:immersed}.

\begin{defn}[Definition 3.17 \cite{fukaya:immersed}]\label{defn:moduli-space} Equip $(X,\omega)$ with a compatible almost
complex structure. Let $E > 0$. We define the set 
$\stackrel{\circ}{\widetilde \cM}(L;\vec a;E)$ to be the set of all quadruples $(\Sigma,u,\vec z,\gamma)$ 
satisfying the following properties:
\begin{enumerate}
\item The space $\Sigma$ is a connected bordered stable curve of genus zero
which is a union of \emph{a disk} and a finite number of trees  of sphere components attached to
the interior of the disk. 
\item The map $u$ is $J$-holomorphic.
\item We put $\vec z = (z_0,\cdots, z_k) \subset \partial \Sigma$ a mutually distinct collection of 
marked points ordered counterclockwise on $\partial \Sigma$.
\item There is a lifting $\gamma: \del \Sigma \setminus \{z_0,\cdots,z_k\}$ of the restriction map
$$
u|_{\del \Sigma\setminus \{z_0,\cdots,z_k\}}: \del \Sigma\setminus \{z_0,\cdots,z_k\} 
\to L \setminus \Self(L).
$$
\item For each $0 \leq i \leq k$, we put
\be\label{eq:switching-bdy}
\left(\lim_{z \nearrow z_i}(\gamma(z)), \lim_{z \searrow z_i}(\gamma(z)\right) \in L(a_i).
\ee
\item We have
\be
\int_{\Sigma} u^*\omega \leq E.
\ee
\item{(Stability)} The set $\text{\rm Aut}((\Sigma,\vec z),u,\gamma)$ 
of automorphisms of $((\Sigma,\vec z),u)$
is finite.
\end{enumerate}
We denote (temporarily) by $\Aut(\Sigma,\vec z), u, \gamma)$ the set of automorphisms for the 
triple $((\Sigma,\vec z), u, \gamma)$.
\end{defn}

It is worthwhile to notice that the condition (4) above implies that 
$\gamma \in \Aut(\Sigma,\vec z), u, \gamma)$ extends continuously to $z_i$ if 
$L(a_i)$ is the diagonal component while it does not if $L(a_i)$ is a switching component.

We recall any element  $\varphi:\Sigma \to \Sigma$  in $\text{\rm Aut}((\Sigma,\vec z),u)$
satisfies
$$
u \circ \varphi = u, \, \varphi(z_i) = z_i
$$
by definition.
The following shows that  Condition (4) does not affect the totality of automorphisms 
of the stable map $((\Sigma, \vec z),u)$.

\begin{lem} Any automorphism $\varphi \in \text{\rm Aut}((\Sigma,\vec z),u)$ automatically satisfies
$$
\gamma \circ \varphi = \gamma
$$
on $\del \Sigma \setminus \{z_0,\cdots, z_k\}$. In this regard, we can define the automorphism group of
the quadruple $(\Sigma,u, \vec z, \gamma)$ to be
$$
\Aut((\Sigma,\vec z), u, \gamma): = \text{\rm Aut}((\Sigma,\vec z),u).
$$
\end{lem}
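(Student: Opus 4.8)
The plan is to reduce the assertion to the uniqueness of a continuous lift through the immersion $i_L$ over the locus $L \setminus \Self(L)$.

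First I would analyze the action of $\varphi \in \Aut((\Sigma,\vec z),u)$ on the boundary. Since $\Sigma$ is a genus-zero bordered stable curve consisting of one disk with trees of sphere components attached to its interior, the disk is the unique component carrying a boundary, so $\varphi$ restricts to a biholomorphism of that disk and hence to an orientation-preserving homeomorphism of the circle $\del\Sigma$. By definition $\varphi$ fixes each marked point, $\varphi(z_i) = z_i$, so $\varphi$ maps $\del\Sigma \setminus \{z_0,\dots,z_k\}$ onto itself; consequently $\gamma \circ \varphi$ is a well-defined continuous map $\del\Sigma\setminus\{z_0,\dots,z_k\} \to \tildeL$ whose domain coincides with that of $\gamma$.

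Next I would verify that $\gamma\circ\varphi$ is itself a lift of $u|_{\del\Sigma\setminus\{z_i\}}$. Using $i_L\circ\gamma = u|_{\del\Sigma\setminus\{z_i\}}$ together with $u\circ\varphi = u$, we get
$$
i_L \circ (\gamma\circ\varphi) = (i_L\circ\gamma)\circ\varphi = u\circ\varphi = u = i_L\circ\gamma
$$
on $\del\Sigma\setminus\{z_i\}$. By Condition (4) of Definition \ref{defn:moduli-space}, the image of $u|_{\del\Sigma\setminus\{z_i\}}$ is contained in $L \setminus \Self(L)$, and by the very definition of the self-intersection set the immersion $i_L$ is injective on $i_L^{-1}(L\setminus\Self(L))$. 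Evaluating the displayed identity at each point of $\del\Sigma\setminus\{z_i\}$ and cancelling $i_L$ therefore yields $\gamma(\varphi(z)) = \gamma(z)$ for all such $z$, i.e. $\gamma\circ\varphi = \gamma$.

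It remains to record the consequence for automorphism groups. An automorphism of the quadruple $(\Sigma,u,\vec z,\gamma)$ is by definition an automorphism of the triple $((\Sigma,\vec z),u)$ that in addition satisfies $\gamma\circ\varphi = \gamma$; the previous paragraph shows this last condition is automatic, so the two groups coincide, $\Aut((\Sigma,\vec z),u,\gamma) = \Aut((\Sigma,\vec z),u)$. There is no substantial obstacle in this argument: the only points deserving a line of care are that $\varphi$ genuinely preserves the boundary circle together with each marked point (so that $\gamma\circ\varphi$ has the correct domain), and that uniqueness of the lift follows simply from injectivity of $i_L$ over $L\setminus\Self(L)$ rather than from any covering-space considerations — in particular no connectedness hypothesis on $\del\Sigma\setminus\{z_i\}$ is needed.
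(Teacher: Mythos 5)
Your proposal is correct and follows essentially the same route as the paper's proof: compose $i_L\circ\gamma=u$ with $u\circ\varphi=u$, note that $\varphi$ preserves $\del\Sigma\setminus\{z_0,\dots,z_k\}$ so the lifting identity can also be applied at $\varphi(z)$, and cancel $i_L$ using its injectivity over $L\setminus\Self(L)$ guaranteed by Condition (4). In fact your write-up is slightly cleaner, since the paper's additional limiting-sequence argument with the points $x_i^+$ is not needed for the conclusion.
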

\begin{proof} Let $z \in \del \Sigma \setminus \{z_0,\cdots, z_k\}$. By definition of $\gamma$, we have
\be\label{eq:iLgamma}
(i_L \circ \gamma)(z) = u(z)
\ee
for all $z \in \del \Sigma \setminus \{z_0,\cdots, z_k\}$. Now let $\varphi \in \text{\rm Aut}(\Sigma,u,\vec z)$.
Then we have $u(\varphi(z)) = u(z)$ for all $z \in \del \Sigma$ in particular. Then by combining the two, we obtain
$$
i_L(\gamma(z)) = u(\varphi(z))
$$
for all $z \in \del \Sigma \setminus \{z_0,\cdots, z_k\}$. 
Since $\varphi(z)$ is also contained in $\del \Sigma \setminus \{z_0,\cdots, z_k\}$, we have
$$
i_L(\gamma(\varphi(z))) = u(\varphi(z))
$$
by applying \eqref{eq:iLgamma} to the point $\varphi(z) \in \del \Sigma \setminus \{z_0,\cdots, z_k\}$.
Combining the last two equalities, we derive
\be\label{eq:iLgammaz}
i_L(\gamma(z)) = i_L(\gamma(\varphi(z))) \in L \setminus \text{\rm Self}(L)
\ee
for all $z \in \del \Sigma \setminus \{z_0,\cdots, z_k\}$. Write $p: = i_L(\gamma(z)) = i_L(\gamma(\varphi(z)))$.
Consider a sequence $x_i^+ \to z$ with $x_i^+ \in \Sigma \setminus \del \Sigma$ and
$$
u(x_i^+) \to i_L(\gamma(z)).
$$
Then we have 
$$
i_L(\gamma(z)) = \lim_i u(\varphi(x_i^+)) \to u(z) = u(\varphi(z)) = \lim_i u(\varphi(x_i^+)) = u(\varphi(z)))
= i_L(\gamma(z))
$$
by continuity of the map $u$ and $\varphi$ up to the boundary. Combining this equality with \eqref{eq:iLgammaz}, we 
obtain
$$
i_L(\gamma(\varphi(z))) = i_L(\gamma(z)) \in L \setminus \text{\rm Self}(L).
$$
by Definition \ref{defn:moduli-space} (4). Since the map $i_L: \widetilde L \to L$ is
 injective on $i_L^{-1}(L \setminus \text{\rm Self}(L))$, we conclude 
$\gamma(\varphi(z)) = \gamma(z)$ for all $z \in \del \Sigma \setminus \{z_0, \cdots, z_k\}$.
This finishes the proof.
\end{proof}

We now denote by
$$
\stackrel{\circ}{\cM}(L;\vec a;E)
$$
the set of isomorphism classes of $\stackrel{\circ}{\widetilde{\cM}}(L;\vec a;E)$. 
For each \emph{switching component} $L(a)$ 
of $\text{\rm Self}(L) \subset  L$ with $a \in \cA_s(L)$ and $E > 0$, 
we associate the moduli spaces and the evaluation maps
$$
\ev = (\ev_0, \cdots, \ev_k): \stackrel{\circ}{\cM}(L;\vec a;E) \to \prod_{i=0}^k L(a_i)
$$
by
\be\label{eq:switching-evaluation}
\ev_i(u;\vec z;\gamma) = \left(\lim_{z \nearrow z_i}(\gamma(z)), \lim_{z \searrow z_i}(\gamma(z))\right).
\ee
We then denote by ${\cM}(L;\vec a;E) \to \prod_{i=0}^k L(a_i)$ the 
compactification of $\stackrel{\circ}{\cM}(L;\vec a;E) \to \prod_{i=0}^k L(a_i)$ 
described in detail in \cite{fukaya:immersed}. Then we have
the following decomposition
$$
\cM(L;\vec a;E) = \coprod_{\hat\Gamma\in \mathscr{TR}_{E;\vec a}} \stackrel{\circ}{\cM}(L;\hat\Gamma)
$$
where the coproduct is taken over some indexing set
$\mathscr{TR}_{E;\vec a}$.
(See \cite[Definition 3.20]{fukaya:immersed} for the precise definition of $\mathscr{TR}_{E;\vec a}$
and the explanation of this decomposition.)
Finally we put
$$
\cM_{k+1}(L;E): = \coprod_{\vec a \in (\cA(L))^{k+1}}\cM(L;\vec a;E).
$$
Now we compactify the moduli spaces $\stackrel{\circ}{\cM}(L;\hat\Gamma)$ for each given $E > 0$
and in turn $\cM(L;\vec a;E)$ and $\cM_{k+1}(L;E)$ in order. We then equip them with stable map
topology. The following is proved by Fukaya.

\begin{thm}[Theorem 3.23 \cite{fukaya:immersed}] The spaces $\cM(L;\vec a;E)$ and $\cM_{k+1}(L;E)$ 
are compact and Hausdorff. 
\end{thm}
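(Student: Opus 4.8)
The plan is to run the standard Gromov-compactness argument for stable maps with Lagrangian boundary condition, in the bounded-geometry and $C^3$-tame framework of \cite{choi-oh:tameJ}, and to upgrade it to the immersed clean-self-intersection situation by keeping track of the boundary lifting $\gamma$ and of the switching data \eqref{eq:switching-evaluation} near $\Self(L)$. First I would establish the uniform a priori estimates: $C^3$-tameness and bounded geometry of $g$ give a monotonicity/isoperimetric inequality --- together with the removable singularity theorem --- hence an energy gap $\hbar>0$ below which there is no non-constant $J$-holomorphic sphere and no non-constant $J$-holomorphic disk with boundary on a local sheet of $L$; combined with interior and boundary $\epsilon$-regularity (the boundary version being available because $i_L$ is an immersion, so on a small enough neighbourhood of any point of $\del\Sigma$ the map lands on the image of a single coordinate chart of $\widetilde L$, i.e.\ on an embedded local Lagrangian), this yields uniform $C^\infty_{\mathrm{loc}}$ bounds for any sequence in $\stackrel{\circ}{\cM}(L;\vec a;E)$ away from finitely many energy-concentration points.

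The core new step is the boundary bubbling analysis. At an interior concentration point one rescales to produce an $S^2$-bubble as usual; at a boundary concentration point $z_\ast\in\del\Sigma$ one rescales to a disk bubble. The immersed feature enters when $z_\ast$ is a limit of marked points $z_i^{(n)}$, or when the lifts $\gamma_n$ approach a self-intersection point $p\in\Self(L)$ as $z\to z_\ast$: here the clean-intersection normal form --- near $p$, $\widetilde L$ is a finite union of linear Lagrangian subspaces meeting cleanly along the switching components $L(a)$ --- furnishes the local model in which the rescaled disk bubble has well-defined one-sided boundary limits lying in a single $L(a)$, and this is precisely the mechanism by which a new switching component becomes attached in the limit. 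Away from such points, unique continuation for $J$-holomorphic curves forces the boundary restriction of the limiting map, on every arc of $\del\Sigma$ between consecutive nodes and marked points, to lift to a single sheet of $\widetilde L$, i.e.\ to map into $L\setminus\Self(L)$; hence no branch switch can take place at a generic boundary point --- every switch in the limit sits at a node or at an original marked point.

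Assembling the principal component with all the bubbles produces a connected genus-zero bordered stable curve of exactly the type permitted in Definition \ref{defn:moduli-space}(1) --- a disk carrying trees of spheres attached to its interior --- together with a boundary lifting $\gamma_\infty$ defined off the nodes and marked points and, by the previous step, a switching label in some $L(a)$ at every node and every marked point; stability is automatic since each added component carries either $\ge\hbar$ of energy or enough special points. Since for the sequence $\big(\lim_{z\nearrow z_i}\gamma_n,\lim_{z\searrow z_i}\gamma_n\big)\in L(a_i)$ and each $L(a_i)$ is a connected, hence relatively closed, component of $L(+)\setminus\Delta_X$, the label at the point of the limit absorbing $z_i$ is again governed by $a_i$; therefore the combinatorial type $\hat\Gamma$ of the limit lies in $\mathscr{TR}_{E;\vec a}$ and the limit represents an element of $\stackrel{\circ}{\cM}(L;\hat\Gamma)\subset\cM(L;\vec a;E)$, with convergence in the stable-map topology by construction. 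Finally the energy gap bounds the number of components of a curve of energy $\le E$ by $E/\hbar$, whence the number of special points is bounded, and $\cA_s(L)$ is a finite set (it indexes the connected components of $L(+)\setminus\Delta_X$); so $\mathscr{TR}_{E;\vec a}$ is finite, $\cM(L;\vec a;E)$ is a finite union of strata, and, the stable-map topology being metrizable, sequential compactness gives compactness --- and likewise for $\cM_{k+1}(L;E)=\coprod_{\vec a}\cM(L;\vec a;E)$, a finite coproduct.

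For Hausdorffness I would invoke uniqueness of the Gromov limit: unique continuation for $J$-holomorphic curves together with the injectivity of $i_L$ on $i_L^{-1}(L\setminus\Self(L))$ pins down the principal map, its bubble tree, the lifting $\gamma_\infty$ and all switching labels up to the equivalence defining the moduli set, so no sequence can converge simultaneously to two distinct points; hence $\cM(L;\vec a;E)$ and $\cM_{k+1}(L;E)$ are Hausdorff. The main obstacle is exactly the second step: showing that the only way the boundary lifting can degenerate is by creating nodes that carry well-defined switching labels, i.e.\ ruling out any pathological limiting behaviour of the boundary near $\Self(L)$ and identifying the resulting combinatorial type within $\mathscr{TR}_{E;\vec a}$. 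This is where the clean-intersection hypothesis, through its linear normal form, is indispensable, and where Fukaya's analysis \cite{fukaya:immersed} genuinely extends the transversal self-intersection case of \cite{akaho-joyce}.
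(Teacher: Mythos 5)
The paper does not actually prove this statement: it is quoted as Theorem 3.23 of \cite{fukaya:immersed}, with the sentence ``The following is proved by Fukaya'' and an earlier referral to that paper for all details of the construction of the compactification, the combinatorial types $\mathscr{TR}_{E;\vec a}$, and the stable-map topology. So there is no in-paper argument to compare against; what you are proposing is a reconstruction of Fukaya's proof, and it must be judged on whether it actually closes the analytic points specific to the clean immersed setting.

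As a reconstruction it follows the expected route (energy gap, $\epsilon$-regularity, bubbling, uniqueness of Gromov limits), but the decisive step is asserted rather than proven, and one of your key assertions is incorrect as stated. Unique continuation does not force the boundary of the limit map, on arcs between special points, into $L\setminus\Self(L)$: in the clean case the switching components $L(a)$ have positive dimension, so their image in $L$ can have dimension $\geq 1$ and a limiting boundary arc can touch, or even lie inside, the image of $\Self(L)$ without any switching taking place. The actual content of the compactness statement is that the boundary lift $\gamma_\infty$ nevertheless exists away from the special points and has well-defined one-sided limits lying in a single component $L(a_i)$ at each node and marked point, so that the limit acquires a well-defined type $\hat\Gamma\in\mathscr{TR}_{E;\vec a}$ and the evaluation maps \eqref{eq:switching-evaluation} extend continuously; this requires the Bott--Morse-type asymptotic analysis (strip-like coordinates and exponential decay estimates at clean self-intersections, in the normal form you mention but do not use), which is exactly what Fukaya supplies and what your sketch defers at the point you yourself flag as ``the main obstacle.'' Likewise, Hausdorffness cannot be settled by invoking ``uniqueness of the Gromov limit'' in the abstract: one needs the actual definition of the stable-map topology on $\cM(L;\vec a;E)$ (including how the lifts and $\bZ_2$-decorations enter the notion of convergence) before uniqueness of limits even makes sense, and metrizability of that topology is part of what is being claimed, not something you may assume to convert sequential compactness into compactness. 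So the proposal is a plausible outline, but it has a genuine gap precisely at the clean-intersection boundary analysis and at the definition of the topology, which are the nontrivial parts of the cited theorem.
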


\subsection{Filtered $A_\infty$ category of immersed Lagrangian Floer theory}

We fix a background class $[st] \in H^2(X,\bZ_2)$ and a triangulation of $X$.
Then we fix a real vector bundle $V$ on the 3-skeleton $X_{[3]}$.  Let
$\mathfrak{Fuk}((X,\omega);V)$ be the filtered Fukaya category and by 
$$
\mathfrak{F}(X) : = \left(\mathfrak{Fuk}((X,\omega);V)\right)^c
$$
its \emph{stricitification} in the sense of Definition \ref{defn:strictification}:

\begin{defn}[Objects] The set of objects $\mathfrak{Ob}(\mathfrak F(X))$ consists of pairs
$$
(\bL,\sigma, \Theta^-, b)
$$
where we decorate $\bL$ by
\begin{enumerate}
\item  $\bL= (\widetilde L,i_L)$ is an immersed Lagrangian submanifold,
\item $\sigma$ an $[st]$-relative spin structure of $(X,\bL)$,
\item  $\Theta^- = \{\Theta_a^-\}$ a collection of $\bZ_2$ local system on 
the switching components when $\Self(L)$ is not transversal but clean,
\item $b$ is a (weak) bounding cochain.
\end{enumerate}
\end{defn}

For the simplicity of notation, we will just write $\bL$ for $(\bL,\sigma,\Theta^-,b)$, 
provided there is no danger of confusion. Then we write
$$
hom_{\mathfrak F(X)}(\bL,\bP): = CF(\bL,\bP;\Lambda^R)
$$
as the morphism space of $\mathfrak F(X)$. \emph{These matters being clearly
mentioned, we will denote the strictification $\mathfrak F(M)$ of $\mathfrak{Fuk}((X,\omega);V)$
also by $\mathfrak{Fuk}((X,\omega);V)$  for the simplicity of notations henceforth, as long as there is no danger of confusion.}

Next we need to compare the Floer complex
$$
CF(\bL_1 \times \bL_2,\bP_1 \times \bP_2)
$$
and the tensor product
$$
CF(\bL_1,\bP_1) \widehat \otimes CF(\bL_2,\bP_2).
$$
We refer readers to \cite{amorim:kuenneth} for the definition of
the tensor product of two Fukaya algebras as in the current context.

Recall the definition
$$
CF(L;\Lambda_0^R) 
= \left(\Omega(\tildeL) \widehat \otimes_R \Lambda_0^R\right) 
\bigoplus \left(\Omega(L(a); \Theta_a^-) \widehat\otimes_R \Lambda_0^R\right)
$$ 
from Definition \ref{defn:CFL}.  From this and the natural isomorphism
\be\label{eq:monoidal}
\Omega(X \times Y;\Lambda_0^R) \cong \Omega(X;\Lambda_0^R) \otimes \Omega(Y;\Lambda_0^R)
\ee
in general, we will have the isomorphism \eqref{eq:product-isomorphism} as a $\Lambda_0^R$ module. 

However we need to prove that the two are quasi-isomorphic as an $A_\infty$ algebra. 
In this regard, the following is proved by Amorim \cite[Theorem 1.1]{amorim:kuenneth}
(embedded case) and by Fukaya \cite[Theorem 15.15] {fukaya:immersed} (immersed case)
 in the context of compact Lagrangian submanifolds. The proofs thereof can  be adapted to  the context of  cylindrical  immersed Lagrangians in 
 the cotangent bundle.
 
\begin{thm}[Theorem 15.15 \cite{fukaya:immersed}]\label{thm:product-quasiisomorphism}
Let $\bL_1$ and $\bL_2$ be immersed Lagrangians. Then there exists a filtered
$A_\infty$ quasi-isomorphism 
$$
CF(\bL_1,\bP_1) \widehat \otimes CF(\bL_2,\bP_2) \to CF(\bL_1 \times \bL_2,\bP_1 \times \bP_2).
$$
\end{thm}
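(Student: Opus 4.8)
The plan is to construct the filtered $A_\infty$ quasi-isomorphism by combining a homotopy-theoretic input (whitehead-type criterion for filtered $A_\infty$ morphisms) with a geometric input (a fiber-product description of the moduli spaces governing $CF(\bL_1\times\bL_2,\bP_1\times\bP_2)$). First I would set up the algebraic map on the level of underlying modules: using Definition \ref{defn:CFL} together with the natural isomorphism \eqref{eq:monoidal} of de Rham complexes with local coefficients, one writes down an explicit $\Lambda_0^R$-linear isomorphism
$$
\Phi_{\ast}\colon CF(\bL_1,\bP_1)\,\widehat\otimes\, CF(\bL_2,\bP_2) \;\xrightarrow{\ \cong\ }\; CF(\bL_1\times\bL_2,\bP_1\times\bP_2),
$$
noting that the switching components of $L_1\times L_2$ are exactly $\bigl(L_1(a)\times \widetilde L_2\bigr)$ and $\bigl(\widetilde L_1\times L_2(b)\bigr)$ (together with the product switching components), so the $\bZ_2$-local systems $\Theta^-_a$ tensor up correctly and the decomposition on the product side matches the tensor product of the decompositions on the two factors. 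This is the routine part and I would only indicate the bookkeeping.

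The main work is to upgrade $\Phi_\ast$ to a filtered $A_\infty$ quasi-isomorphism. I would follow the strategy of Amorim \cite{amorim:kuenneth} and Fukaya \cite[\S15]{fukaya:immersed}: rather than producing the $A_\infty$ maps $\{\mathfrak{f}_k\}$ directly, one shows that the two filtered $A_\infty$ structures are \emph{homotopy equivalent} by exhibiting, for each energy level $E$ and each modular graph $\hat\Gamma$, a compatible identification of the moduli spaces
$$
\cM_{k+1}\bigl(L_1\times L_2;E\bigr) \quad\text{with}\quad \coprod_{E_1+E_2=E}\cM_{k+1}(L_1;E_1)\times \cM_{k+1}(L_2;E_2)
$$
obtained by projecting a $J_1\oplus J_2$-holomorphic disk in $-X_1\times X_2$ (with boundary condition the product Lagrangian, suitably lifted as in Definition \ref{defn:moduli-space}(4)) to its two factors; the fiber-product compatibility underlying this is precisely the content of Lemma \ref{lem:fiber-product} and Corollary \ref{cor:correspondence}, which ensures that the clean-intersection structure of the product Lagrangian is governed by the fiber products of the factors. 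From such an identification the structure maps $\mathfrak{m}_k$ of the product, pulled back along $\Phi_\ast$, agree with those of the tensor product up to the usual correction terms coming from sphere bubbles and boundary strata, and one concludes by the obstruction theory for filtered $A_\infty$ morphisms (energy/number-of-inputs filtration argument of \cite{fukaya:immersed}) that $\Phi_\ast$ extends to $\{\mathfrak{f}_k\}$ with $\mathfrak{f}_1=\Phi_\ast$, hence is a filtered $A_\infty$ quasi-isomorphism since $\Phi_\ast$ is already an isomorphism on the underlying modules.

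The step I expect to be the main obstacle is the transversality and gluing analysis needed to make the moduli-space identification rigorous in the \emph{clean} (not merely transversal) self-intersection setting and in the \emph{noncompact cotangent-bundle} setting: one must check that the Kuranishi/virtual structures on $\cM_{k+1}(L_1\times L_2;E)$ are the product of those on the factors (compatibly with the evaluation maps to the switching components $L(a_i)$ via \eqref{eq:switching-evaluation}), that the de Rham pushforward along $\ev$ commutes with the K\"unneth isomorphism \eqref{eq:monoidal}, and that energy estimates/$C^3$-tameness from \cite{choi-oh:tameJ} still force the relevant compactness so that Theorem \cite[3.23]{fukaya:immersed} applies to the cylindrical immersed Lagrangians at hand. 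I would handle this by invoking that Fukaya's construction in \cite[\S15]{fukaya:immersed} is stated for clean self-intersections and is local near the switching components, so the only genuinely new ingredient is the adaptation to tame noncompact targets, which proceeds exactly as in the embedded cylindrical case; I would state this carefully as the one place where the arguments of \cite{amorim:kuenneth,fukaya:immersed} need to be re-run rather than quoted verbatim.
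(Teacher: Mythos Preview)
Your proposal is correct and follows essentially the same approach as the paper: the paper does not give a self-contained proof of this theorem but instead refers to \cite{amorim:kuenneth} and \cite{fukaya:immersed}, highlighting exactly the point you identify as the main obstacle, namely relating the product evaluation maps $\ev_i^1\times\ev_i^2$ on $\cM_{k+1}(J_1,\bL_1;\beta_1)\times\cM_{k+1}(J_2,\bL_2;\beta_2)$ to the evaluation $\Ev_i$ on $\cM_{k+1}(J_1\times J_2,\bL_1\times\bL_2;\beta_1\times\beta_2)$ and constructing Kuranishi structures on the latter compatible with the projections $\Pi_j$. Your outline is in fact more detailed than the paper's sketch, and your remark that the only genuinely new adaptation is to the tame noncompact (cylindrical) setting matches the paper's one-line comment that ``the proofs thereof can be adapted to the context of cylindrical immersed Lagrangians in the cotangent bundle.''
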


Referring readers to \cite{amorim:kuenneth}, \cite{fukaya:immersed} for the full details of the proof of 
Theorem \ref{thm:product-quasiisomorphism}, here we just mention that one needs to relate the evaluation map
$$
\ev_i^1 \times \ev_i^2:\cM_{k+1}(J_1,\bL_1; \beta_1) \times \cM_{k+1}(J_2,\bL_2;  \beta_2) 
\to \widetilde L_1 \times \widetilde L_2
$$
with the evaluation map
$$
\Ev_i: \cM_{k+1}(J_1 \times J_2,\bL_1 \times \bL_2; \beta_1 \times \beta_2) \to \widetilde L_1 \times \widetilde L_2
$$
and to make certain construction of Kuranishi structures on the moduli space
$\cM_{k+1}(J_1 \times J_2,\bL_1 \times \bL_2; \beta_1 \times \beta_2)$ suitably 
adapted to the projection maps
$$
\Pi_j: \cM_{k+1}(J_j,\bL_j; \beta_j), \quad j = 1, \, 2.
$$
(See \cite[Section 3]{amorim:kuenneth}, \cite[Section 5.2]{fukaya:immersed} for details.)

\section{Nadler's generation result for the immersed Lagrangians}
\label{sec:nadler}

In this section, we provide some more explanation on how Nadler's proof
in \cite[Section 4]{Nadler} applies to the case of \emph{tautologically unobstructed} 
compact immersed Lagrangians in the cotangent bundle $T^*M$ for general 
manifold $M$ that has the cylindrical structure at infinity, i.e., that there exists a compact
subset $C \subset M$ such that we have
$$
M \setminus K \cong [0, \infty) \times \partial_\infty M
$$
as a Riemannian manifold, where $\partial_\infty M$ is a closed manifold and 
its metric $g = ds^2 + s \, g_{\partial N}$. In the case of our main interest in \cite{oh-suen:lag-multi-section}, 
$M = M_{\bb{R}}$ and $\partial_\infty M \cong S^{n-1}$.

We start with giving the definition of \emph{tautologically unobstructed}  Lagrangian 
immersions.
\begin{defn}\label{defn:tautologically-unobstructed} We say that an immersed Lagrangian brane
$\bL: = ((i_L: \widetilde L \to L), \sigma, \Theta^-)$ is  \emph{tautologically unobstructed}
if $\mathfrak{m}_0^\bL(1) = 0$.
\end{defn}
For any such brane, the zero element $b = 0$ is a (strict) Maurer-Cartan element, i.e., 
the \emph{un-deformed} $\mathfrak m_1$ itself satisfies $\mathfrak m_1^2 = 0$ 

In the rest of the present paper, we will focus on the case of tautologically unobstructed 
(immersed) Lagrangian branes which are needed for the main purpose of \cite{oh-suen:lag-multi-section}.

The following is the main generation result of the present paper, which is announced in
\cite{oh-suen:lag-multi-section} and  which is the counterpart of \cite[Theorem 4.1.1]{Nadler}
in the framework of the current enlarged immersed Lagrangian Fukaya category.
(For the definition of $\Fuk^0(T^*M)$ below, we refer readers to Definition \ref{defn:strictification}, 
the decomposition \eqref{eq:L-decompose} and Proposition \ref{prop:curvature-free} in Appendix.)

\begin{thm}\label{thm:generation} Let $\Fuk(T^*M)$ be the Fukaya category generated by weakly unobstructed
immersed Lagrangian branes and let $\bL$ be an object of $\Fuk^0(T^*M) \subset \Fuk(T^*M)$. 
The Yoneda module
$$
\mathcal{Y}_\bL: = hom_{\Fuk(T^*M)}(\cdot, \bL)
$$
can be expressed as a twisted complex with terms 
$hom_{\Fuk(T^*M)}(\alpha_M(\cdot), L_{\tau_{\mathfrak a}*})$.
\end{thm}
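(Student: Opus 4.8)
The plan is to follow Nadler's strategy from \cite[Section 4]{Nadler} essentially step by step, but with every appeal to transversality of Lagrangian intersections replaced by its clean-intersection counterpart, and with the K\"unneth isomorphism of Theorem \ref{thm:product-quasiisomorphism} taking over the role that the monoidal structure plays in the embedded case. First I would fix a triangulation $\cT = \{\tau_{\mathfrak a}\}$ of $M$ that is fine enough relative to the cylindrical/conical behavior of $\bL$ at infinity and to the stratification induced by $\bL$; for each simplex $\tau_{\mathfrak a}$ one has the associated \emph{standard brane} $L_{\tau_{\mathfrak a}*}$ (the conormal-type Lagrangian of $\tau_{\mathfrak a}$, perturbed so as to be cylindrical at infinity and to meet $L$ cleanly), and the diagonal/doubling functor $\alpha_M$ taking an object to the appropriate object of a product category. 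The heart of the argument is then to produce, for the given tautologically unobstructed $\bL$, an explicit twisted complex over the $L_{\tau_{\mathfrak a}*}$ that represents the Yoneda module $\mathcal Y_\bL$: this is done by resolving $\bL$ (more precisely, the constructible-sheaf-theoretic or microlocal model of its Yoneda module) by the standard objects, exactly as in Nadler, using the combinatorics of $\cT$.

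The key steps, in order, are: (1) set up the standard objects $L_{\tau_{\mathfrak a}*}$ and verify that, after a Hamiltonian perturbation cylindrical at infinity, each of them has clean intersection with $L$ and with every other standard object — here Corollary \ref{cor:correspondence} and Lemma \ref{lem:fiber-product} guarantee that the relevant fiber products stay smooth, so that all the Floer complexes $hom_{\Fuk(T^*M)}(L_{\tau_{\mathfrak a}*}, \bL)$ and the structure maps among them are defined in Fukaya's sense; (2) compute these morphism complexes, identifying $hom(L_{\tau_{\mathfrak a}*},\bL)$ with the local sections of the microlocal model of $\bL$ over the open star of $\tau_{\mathfrak a}$, and identifying the $\mathfrak m_k$-operations with the \v Cech-type differentials of the triangulation — this is where one invokes that $\mathfrak m_0^\bL(1)=0$ so that $\mathfrak m_1^2=0$ and the twisted complex is genuinely a complex; (3) assemble the twisted complex $\big(\bigoplus_{\mathfrak a} hom_{\Fuk(T^*M)}(\alpha_M(\cdot), L_{\tau_{\mathfrak a}*})[\,\cdot\,], \{\mathfrak m_{1,\mathfrak a\mathfrak b}\}\big)$ and construct a closed, degree-zero morphism from it to $\mathcal Y_\bL$; (4) prove this morphism is a quasi-isomorphism by a spectral-sequence / filtration argument on the skeleta of $\cT$, reducing to the contractibility of the open stars, i.e. to the local statement that a standard object resolves a point-like brane — this is Nadler's local calculation, and it transports to the immersed setting because the relevant moduli spaces are the embedded ones (the standard objects are embedded; only $\bL$ is immersed, and its switching components contribute only through the already-understood $\bZ_2$-local-system decoration of $CF(L;\Lambda_0^R)$ in Definition \ref{defn:CFL}).

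The step I expect to be the main obstacle is step (2) together with the compatibility required in step (3): namely, showing that the K\"unneth quasi-isomorphism of Theorem \ref{thm:product-quasiisomorphism} intertwines the functor $\alpha_M$ and the product structure in a way that is \emph{strictly compatible} with the twisted-complex differentials, i.e. that passing from $CF(\bL_1,\bP_1)\widehat\otimes CF(\bL_2,\bP_2)$ to $CF(\bL_1\times\bL_2,\bP_1\times\bP_2)$ does not introduce correction terms that spoil the identification of $\mathfrak m_1$ on the standard-brane side with the combinatorial differential of $\cT$. Because the quasi-isomorphism of Theorem \ref{thm:product-quasiisomorphism} is only an $A_\infty$-quasi-isomorphism, not an isomorphism, one has to carry along its higher components and check that they are homotopically negligible for the purpose of building the twisted complex; controlling this requires the clean-intersection Kuranishi-structure bookkeeping that matches the evaluation maps $\ev_i^1\times\ev_i^2$ with $\Ev_i$, as recalled after Theorem \ref{thm:product-quasiisomorphism}. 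A secondary technical point — the uniform control of the perturbed standard branes and of $\bL$ at the cylindrical end so that all moduli spaces stay compact (Theorem \cite[3.23]{fukaya:immersed} applies) and no energy escapes — is routine but must be set up carefully before any of the above can be stated cleanly. Once these are in place, steps (1) and (4) are, respectively, a direct application of the clean-intersection formalism of Section 2 and a verbatim transcription of Nadler's inductive argument over the skeleta.
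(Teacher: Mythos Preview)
Your proposal misses the central organizing idea of both Nadler's argument and the paper's adaptation of it: one does \emph{not} resolve $\bL$ directly by the standard branes $L_{\tau_{\mathfrak a}*}$. The paper (following \cite[Section~4]{Nadler}) instead proceeds through the diagonal. First, the diagonal correspondence (Proposition~\ref{prop:generation}) gives a functorial quasi-isomorphism
\[
hom_{\Fuk(T^*M)}(\bP,\bL)\ \cong\ hom_{\Fuk(T^*M\times T^*M)}\big(\alpha_M(\bL)\times\bP,\ L_{\Delta_M}\big).
\]
Second, one resolves the \emph{diagonal brane} $L_{\Delta_M}$ --- which is embedded --- as a twisted complex of the standard branes $L_{\Delta_{\tau_{\mathfrak a}}*}$; this is Nadler's Lemma~4.3.1, a purely embedded/constructible-sheaf statement requiring no immersed technology. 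Third, the K\"unneth quasi-isomorphism (Theorem~\ref{thm:product-quasiisomorphism}) together with the ``moving the diagonal'' step splits each resulting term as
\[
hom_{\Fuk(T^*M)}\big(\alpha_M(\bL),L_{\{x_{\mathfrak a}\}}\big)\otimes hom_{\Fuk(T^*M)}\big(\bP,L_{\tau_{\mathfrak a}*}\big),
\]
and this is already the desired twisted complex for $\mathcal Y_\bL$. The immersed brane $\bL$ is never itself resolved; it only appears as a passive test object against the resolved diagonal. This is precisely why the extension to the immersed setting is cheap: the hard resolution step stays entirely in the embedded world, and the only genuinely new inputs are the immersed K\"unneth theorem and the diagonal correspondence for immersed test objects.

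Your steps (2) and (4) --- computing $hom(L_{\tau_{\mathfrak a}*},\bL)$ as local sections of a microlocal model and then running a skeletal spectral sequence --- amount to establishing the microlocalization of $\bL$ from scratch, which is essentially what the theorem is being used to \emph{prove}; at the very least it is a much harder route and not ``exactly as in Nadler''. Likewise, the obstacle you single out (strict compatibility of the K\"unneth $A_\infty$-quasi-isomorphism with hand-built \v Cech differentials) does not arise in the paper's approach: the twisted-complex differentials are inherited wholesale from the resolution of the embedded diagonal, so there is no combinatorial differential to match by hand. The K\"unneth map enters only \emph{after} the resolution, term by term.
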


The main argument in Nadler's proof consist of 4 parts (See \cite[Section 4.5]{Nadler}.)
Nadler's proof:
\begin{enumerate}
\item Description of product branes in $\Fuk^0(T^*M_0 \times T^*M_1)$,
\item Triangulation of diagonal,
\item Moving the diagonal,
\item Combining them all.
\end{enumerate}

\medskip

We explain how we can adapt each of these Nadler's argument to the current immersed setting
in each part of the above 4 parts. Since the product $L_1 \times L_2$ of two
immersed Lagrangians cannot have transverse self-intersections but only clean
self-intersections of higher dimension, we need to employ Fukaya's framework of 
immersed Lagrangian Floer theory whose objects consist of immersed
Lagrangian submanifolds with clean self-intersections \cite{fukaya:immersed}.

\subsection{Product branes in $\Fuk(T^*M_0 \times T^*M_1)$}

Let $M_i$ for $i=1,\,2$ be either compact or tame manifolds and consider their
cotangent bundles $T^*M_i$. We consider the product map on the set of branes of immersed Lagrangian submanifolds
$$
\bb{L}_i = (\widetilde L_i, i_{L_i}, \Theta_i), \quad i = 1, \, 2.
$$
\begin{lem}[Compare with Lemma 4.2.1 \cite{Nadler}] For any test objects $\bL_0$, $\bP_0$ of 
$\Fuk(T^*M_0)$ and $\Fuk(T^*M_1)$, there is a functorial (in each object) quasi-isomorphism of 
Floer complexes
\be\label{eq:product-isomorphism}
hom_{\mathfrak F(T^*M_0 \times T^*M_1)}(\bL_0 \times \bL_1,\bP_0 \times \bP_1)
\cong hom_{\mathfrak F(T^*M_0)}(\bL_0,\bP_0)
\times hom_{\mathfrak F(T^*M_1)}(\bL_1, \bP_1).
\ee
\end{lem}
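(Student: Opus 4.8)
The plan is to reduce the statement to the $A_\infty$ Künneth quasi-isomorphism of Theorem \ref{thm:product-quasiisomorphism} together with the monoidality \eqref{eq:monoidal} of the de Rham model, and then to check functoriality by examining the construction at the level of moduli spaces. First I would fix the test objects $\bL_0 = (\widetilde L_0, i_{L_0}, \sigma_0, \Theta_0^-, b_0)$ and $\bP_0$ in $\Fuk(T^*M_0)$, and $\bL_1$, $\bP_1$ in $\Fuk(T^*M_1)$, and observe that their products $\bL_0 \times \bL_1$ and $\bP_0 \times \bP_1$ are again immersed Lagrangian branes with clean self-intersections by Corollary \ref{cor:correspondence} (the self-intersection locus of a product is the union of the two ``slab'' pieces, which is clean), so that the left-hand side of \eqref{eq:product-isomorphism} is defined in $\mathfrak F(T^*M_0 \times T^*M_1)$. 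The underlying $\Lambda_0^R$-module identification is immediate from Definition \ref{defn:CFL}: writing $L_0(+)$ and $L_1(+)$ for the respective fiber products, Lemma \ref{lem:fiber-product} applied with $L_{12} = \Delta$ gives $(\widetilde L_0 \times \widetilde L_1) \times_{\,\cdot\,} (\widetilde P_0 \times \widetilde P_1) \cong (\widetilde L_0 \times_{T^*M_0} \widetilde P_0) \times (\widetilde L_1 \times_{T^*M_1} \widetilde P_1)$, and then the monoidality \eqref{eq:monoidal} of $\Omega(-)$, applied component-by-component over the index set $\cA$, produces the desired module isomorphism after $T$-adic completion.

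The substance is upgrading this module isomorphism to an $A_\infty$ (hence chain-level) quasi-isomorphism and checking it is functorial. For the first half I would invoke Theorem \ref{thm:product-quasiisomorphism} directly: it provides a filtered $A_\infty$ quasi-isomorphism $CF(\bL_0,\bP_0) \widehat\otimes CF(\bL_1,\bP_1) \to CF(\bL_0 \times \bL_1, \bP_0 \times \bP_1)$, where the left side is the Amorim tensor product of Fukaya algebras/modules; since a filtered $A_\infty$ quasi-isomorphism induces an isomorphism on cohomology with the $b$-deformed differentials, and the products of the Maurer-Cartan data satisfy $b_{0\times 1} = b_0 \otimes 1 + 1 \otimes b_1$ (so that $\mathfrak m_1^{b_0} \otimes 1 \pm 1 \otimes \mathfrak m_1^{b_1}$ is exactly the differential on the tensor product), this gives the quasi-isomorphism on Floer cohomology written in \eqref{eq:product-isomorphism}. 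The only point requiring care here is that \cite{amorim:kuenneth} and \cite[Theorem 15.15]{fukaya:immersed} are stated for \emph{compact} Lagrangians, whereas our $M_i$ may be tame; as the excerpt already notes, the proofs adapt to cylindrical immersed Lagrangians in the cotangent bundle, and I would simply cite this adaptation, pointing to the bounded-geometry hypotheses on $(X,\omega,J)$ that make the relevant moduli spaces compact (Theorem [Theorem 3.23 \cite{fukaya:immersed}]) and the energy estimates uniform.

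For functoriality, the key observation is that the quasi-isomorphism of Theorem \ref{thm:product-quasiisomorphism} is built from a comparison of evaluation maps: the product moduli space $\cM_{k+1}(J_0 \times J_1, \bL_0 \times \bL_1; \beta_0 \times \beta_1)$ carries a Kuranishi structure making the projections $\Pi_j$ to $\cM_{k+1}(J_j, \bL_j; \beta_j)$ submersions-up-to-Kuranishi, and $\Ev_i$ factors compatibly through $\ev_i^0 \times \ev_i^1$. Naturality of this construction in the Lagrangian inputs — i.e., the fact that a morphism $\bP_0 \to \bP_0'$ (a Floer cocycle) induces the expected square — follows because the $A_\infty$ module structure maps on both sides are defined by the same fiber-integration-over-Kuranishi-spaces recipe applied to these compatible moduli spaces. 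I would spell this out by writing the chain homotopy commutative diagram relating, for a morphism in one factor, the two composites and citing \cite[Section 3]{amorim:kuenneth}, \cite[Section 5.2]{fukaya:immersed} for the construction of the interpolating Kuranishi structures. The main obstacle I anticipate is precisely this functoriality/naturality bookkeeping: Theorem \ref{thm:product-quasiisomorphism} as stated is an \emph{existence} statement for the quasi-isomorphism, and extracting from its proof that it can be chosen \emph{functorially} in the test objects requires unwinding the moduli-theoretic construction rather than just quoting the theorem. A secondary technical point is ensuring the grading conventions match: one checks $\deg$ is additive under products, which follows from the formula $\deg_{\bb{L}_1,\bb{L}_2}(p) = \frac1\pi(\sum\theta_i + \theta_{\bb{L}_1} - \theta_{\bb{L}_2})$ since the angles of intersection and the phase functions of a product Lagrangian are the concatenation, respectively sum, of those of the factors.
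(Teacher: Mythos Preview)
Your approach is essentially the same as the paper's: both reduce the lemma directly to Theorem~\ref{thm:product-quasiisomorphism}. The paper's proof is in fact the single sentence ``This lemma follows from Theorem~\ref{thm:product-quasiisomorphism},'' so your elaboration on the module-level identification, the tame/cylindrical adaptation, and especially the functoriality bookkeeping goes well beyond what the paper itself supplies---your concern that functoriality requires unwinding the moduli-theoretic construction rather than merely quoting the theorem is legitimate, but the paper does not address it and simply takes it as part of the cited result.
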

\begin{proof} This lemma follows from Theorem \ref{thm:product-quasiisomorphism}.
\end{proof}

\subsection{Diagonal correspondence and its associated $A_\infty$ functor}

Fukaya also extended Theorem \ref{thm:product-quasiisomorphism} to the categorial level.
The following statement 
is a variant of \cite[Theorem 16.8]{fukaya:immersed} and \cite[Proposition 4.2.2]{Nadler}:

\begin{thm}[Theorem 16.8 \cite{fukaya:immersed}] There exists a strict and unital filtered
$A_\infty$ functor
$$
\mathfrak{Fuk}(X_1) \otimes \mathfrak{Fuk}(X_2) 
\to \mathfrak{Fuk}(X_1 \times X_2).
$$ 
\end{thm}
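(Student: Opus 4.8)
The plan is to categorify the object-wise Künneth quasi-isomorphism of Theorem~\ref{thm:product-quasiisomorphism}, upgrading it to a filtered $A_\infty$ functor in the manner of \cite[Sections~15--16]{fukaya:immersed}. On objects the functor is $(\bL_1,\bL_2)\mapsto\bL_1\times\bL_2$; by the monoidality recalled in the Introduction this is again an immersed Lagrangian brane with \emph{clean} (in general not transversal) self-intersection, carrying the external product of the two given relative spin structures, the external products of $\Theta_1^-$, $\Theta_2^-$ with the trivial system on the switching strata $(\Self(L_1)\times L_2)\cup(L_1\times\Self(L_2))$, and with $\mathfrak{m}_0=0$ inherited from the tautological unobstructedness of the factors via Theorem~\ref{thm:product-quasiisomorphism}, so that the image lies in $\Fuk^0$. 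For the first component $\mathfrak{F}_1$ on morphism complexes I would take the filtered $A_\infty$ quasi-isomorphism
$$
CF(\bL_1,\bP_1)\widehat\otimes CF(\bL_2,\bP_2)\longrightarrow CF(\bL_1\times\bL_2,\bP_1\times\bP_2)
$$
of Theorem~\ref{thm:product-quasiisomorphism}, the source being equipped with the fixed model of the tensor-product $A_\infty$ structure in the sense of \cite{amorim:kuenneth}.

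The geometric engine is the fact that, for the product almost complex structure $J_1\times J_2$, a holomorphic polygon $u=(u_1,u_2)$ in $X_1\times X_2$ bounding $\bL_1\times\bL_2$ is just a pair of holomorphic polygons $u_j$ in $X_j$ sharing one domain; after lifting the boundary as in Definition~\ref{defn:moduli-space} and using the fiber product identities of Lemma~\ref{lem:fiber-product} and Corollary~\ref{cor:correspondence} on the switching strata, one sees that $\Ev_i$ on $\cM_{k+1}(J_1\times J_2,\bL_1\times\bL_2;\beta_1\times\beta_2)$ factors through the fiber product of $\ev_i^1$ on $\cM_{k+1}(J_1,\bL_1;\beta_1)$ and $\ev_i^2$ on $\cM_{k+1}(J_2,\bL_2;\beta_2)$ over the common moduli space of domains. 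The work is then to install Kuranishi structures and CF-perturbations on the product moduli spaces that are fibered over the projections $\Pi_j$ to the factors and compatible with gluing and with the data entering the tensor-product structure on the source; this is the construction indicated just after Theorem~\ref{thm:product-quasiisomorphism} and carried out in \cite[Section~5.2]{fukaya:immersed}.

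To obtain the higher components I would count polygons for a family of almost complex structures interpolating between $J_1\times J_2$ and a generic structure used to set up $\Fuk(X_1\times X_2)$; the resulting maps $\mathfrak{F}_k$, $k\geq2$, record the failure of $\mathfrak{F}_1$ to intertwine the two $A_\infty$ structures on the nose. The $A_\infty$ functor relations then follow from the standard codimension-one boundary analysis of the compactified parametrized moduli spaces: bubbling at the incoming marked points produces the terms $\mathfrak{F}_{k_1}(\cdots)\cdots$ precomposed with the source operations, bubbling at the outgoing point produces $\mathfrak{m}^{X_1\times X_2}\circ(\mathfrak{F}\otimes\cdots\otimes\mathfrak{F})$, and the two ends of the interpolation parameter contribute the remaining terms. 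Strictness, $\mathfrak{F}_0=0$, holds because every brane in sight is curvature-free; unitality, namely $\mathfrak{F}_1(e_{\bL_1}\otimes e_{\bL_2})=e_{\bL_1\times\bL_2}$ and $\mathfrak{F}_k(\cdots,e,\cdots)=0$ for $k\geq2$, follows in the de~Rham model (where the unit is the constant function $1$) from the fact that the quasi-isomorphism of Theorem~\ref{thm:product-quasiisomorphism} can be taken unital, together with the usual forgetful-map argument showing that a perturbed count with a unit insertion vanishes.

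The hard part will not be the $A_\infty$ combinatorics but the \emph{coherent} choice of virtual perturbation data: one must fix Kuranishi structures and CF-perturbations for all objects $(\bL_1,\bL_2)$, all $k$, and all values of the interpolation parameter simultaneously, fibered over the projections $\Pi_j$, compatible under every forgetful and gluing map, and compatible with the data chosen independently for $\Fuk(X_1)$, $\Fuk(X_2)$, their tensor product, and $\Fuk(X_1\times X_2)$. Organizing these choices by induction on energy and on $k$ is precisely the content of \cite[Sections~15--16]{fukaya:immersed}, to which we refer for the full construction.
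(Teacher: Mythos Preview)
The paper does not give its own proof of this statement: it is quoted verbatim as \cite[Theorem 16.8]{fukaya:immersed} and used as a black box, with the only commentary being the remarks following Theorem~\ref{thm:product-quasiisomorphism} about comparing the evaluation maps $\ev_i^1\times\ev_i^2$ and $\Ev_i$ and about Kuranishi structures adapted to the projections $\Pi_j$. Your sketch is a reasonable summary of how the construction in \cite[Sections 15--16]{fukaya:immersed} proceeds, and it is consistent with those remarks; there is simply no in-paper proof to compare against.

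One point to tighten: you justify strictness by saying ``every brane in sight is curvature-free'' and restrict attention to $\Fuk^0$, but the theorem as stated concerns the full (curved, weakly unobstructed) categories $\mathfrak{Fuk}(X_i)$. Strictness of a filtered $A_\infty$ functor means $\mathfrak F_0=0$, which is a statement about the functor, not about the curvature of the objects; in Fukaya's construction it comes from the fact that the relevant parametrized moduli spaces with no inputs are empty (there is no nonconstant holomorphic disc with a single output marked point contributing once the correspondence is set up), not from tautological unobstructedness of the branes. Likewise your claim that the image lands in $\Fuk^0$ is stronger than what the theorem asserts.
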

 
Then using this general result, we can duplicate 
Nadler's argument used in his proof of \cite[Proposition 4.2.2]{Nadler} by
adapting his proof to the case of tautologically unobstructed immersed Lagrangian submanifolds 
$\bP_i$.

The following is the counterpart of \cite[Proposition 4.2.2]{Nadler}.

\begin{prop}[Compare with Theorem 5.15 \cite{fukaya:immersed},
Proposition 4.2.2 \cite{Nadler}]\label{prop:generation} For any test objects
$\bP_0,\, \bP_1$ of $\Fuk^0(T^*M)$, there is a functorial quasi-isomorphism of 
Floer complexes
$$
hom_{\Fuk(T^*M)}(\bP_0,\bP_1) \cong 
hom_{\Fuk(T^*M \times T^*M)}(L_{\Delta_M}, \bP_0 \times \alpha_M(\bP_1)).
$$
\end{prop}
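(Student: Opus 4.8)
The plan is to realize the desired quasi-isomorphism as the composite of two identifications: first, the Künneth-type quasi-isomorphism of Lemma 4.4 (based on Theorem \ref{thm:product-quasiisomorphism}) together with the $A_\infty$ functor of Theorem 16.8, which sends the diagonal correspondence $L_{\Delta_M}$ to an operation computing Floer cohomology with the ``doubled'' object $\bP_0 \times \alpha_M(\bP_1)$; and second, the identification of the Floer complex over the diagonal correspondence with the ordinary Floer complex $hom_{\Fuk(T^*M)}(\bP_0,\bP_1)$. Concretely, I would first recall the definition of $\alpha_M$ from the Appendix (the ``twisting'' of the conormal-direction structure, $\eqref{eq:L-decompose}$) so that $\bP_1 \mapsto \alpha_M(\bP_1)$ lands in $\Fuk^0(T^*M)$ with the sign/grading conventions making it compatible with reflection $-X \times X$; this is the input needed to even state the right-hand side. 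Then I would invoke Corollary \ref{cor:correspondence}, which gives the diffeomorphism
$$
(\widetilde P_0 \times \widetilde P_1) \times_{X \times X}(\Delta_X) \cong \widetilde P_0 \times_X \widetilde P_1,
$$
the geometric heart of the matter: it says that the moduli problem defining Floer theory of $L_{\Delta_M}$ against $\bP_0 \times \alpha_M(\bP_1)$ has the same generators (clean fiber product, including switching components) as the Floer complex of $\bP_0$ against $\bP_1$.

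The key steps, in order, are: \emph{(i)} set up $\alpha_M$ and verify it preserves $\Fuk^0$ (curvature-freeness) using Proposition \ref{prop:curvature-free}; \emph{(ii)} apply Lemma 4.4 to rewrite $hom_{\mathfrak F(T^*M \times T^*M)}(L_{\Delta_M}, \bP_0 \times \alpha_M(\bP_1))$ after realizing $L_{\Delta_M}$ itself as a product-type object is \emph{not} possible, so instead apply the $A_\infty$ functor $\mathfrak{Fuk}(X) \otimes \mathfrak{Fuk}(X) \to \mathfrak{Fuk}(X \times X)$ of Theorem 16.8 in the Lagrangian-correspondence direction to reduce the right-hand side to a $\mu^2$-type composition with $L_{\Delta_M}$; \emph{(iii)} use Corollary \ref{cor:correspondence} to match the underlying manifolds of the moduli spaces, and then match the Kuranishi structures — here one transports the Kuranishi structures via the diffeomorphism of fiber products, exactly parallel to the comparison of $\ev_i^1 \times \ev_i^2$ with $\Ev_i$ discussed after Theorem \ref{thm:product-quasiisomorphism}; \emph{(iv)} check that the resulting chain map intertwines $\mathfrak m_1$ (and more generally is a filtered $A_\infty$ quasi-isomorphism), using that both $\bP_0,\bP_1 \in \Fuk^0$ so $b = 0$ is a strict Maurer–Cartan element on both sides and no bounding-cochain bookkeeping intervenes; and \emph{(v)} check functoriality in $\bP_0$ and $\bP_1$ separately, which follows because every identification used is natural in the test objects.

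The main obstacle I expect is step \emph{(iii)}: matching the Kuranishi structures (and the attendant sign/orientation data coming from relative spin structures and the $\bZ_2$-local systems $\Theta_a^-$ on the clean switching components) across the fiber-product diffeomorphism of Corollary \ref{cor:correspondence}. On the level of sets this is immediate, but one must show the stable-map compactifications and their obstruction bundles are compatibly identified, and that the reflection $X \mapsto -X$ appearing in the correspondence $-X \times X$ together with the twist $\alpha_M$ produces exactly the orientation signs that make the $\mathfrak m_1$'s agree rather than differ by a global sign. In the embedded exact case this is \cite[Proposition 4.2.2]{Nadler}; in our setting the extra work is precisely that $\Self(L_{\Delta_M})$ and $\Self(\bP_0 \times \alpha_M(\bP_1))$ are clean but not transverse, so one cannot appeal to transversality to simplify the gluing analysis and must instead quote Fukaya's clean-intersection Kuranishi package from \cite[Sections 3, 5.2]{fukaya:immersed} directly. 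The remaining steps are essentially formal once the conventions of the Appendix are fixed.
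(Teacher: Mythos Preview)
Your overall strategy is exactly the paper's: the proposition is treated as a special case of Fukaya's Lagrangian-correspondence result \cite[Theorem 5.15]{fukaya:immersed} (equivalently, Nadler's \cite[Proposition 4.2.2]{Nadler} applied verbatim), with Corollary~\ref{cor:correspondence} supplying the geometric identification of fiber products that matches the moduli problems. Your steps (ii)--(v) track this faithfully.

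Two concrete errors, however, need fixing. First, your identification of $\alpha_M$ is wrong: it is \emph{not} defined in the Appendix, and equation~\eqref{eq:L-decompose} has nothing to do with it (that equation is the decomposition of objects by potential level $\lambda$). The map $\alpha_M$ is the involution $(x,\xi)\mapsto(x,-\xi)$ of $T^*M$, introduced in Section~4.4; it is a geometric anti-symplectomorphism, not a twist of brane data. Your step~(i) as written would not produce the correct object $\alpha_M(\bP_1)$. Second, your remark that ``$\Self(L_{\Delta_M})$ \ldots\ [is] clean but not transverse'' is off: the diagonal $L_{\Delta_M} = \{(x,\xi;x,-\xi)\}$ is an \emph{embedded} Lagrangian, so its self-intersection set is empty. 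The clean (non-transverse) self-intersections in the problem live entirely on the product side $\bP_0 \times \alpha_M(\bP_1)$, and that is where Fukaya's clean-intersection package from \cite{fukaya:immersed} is genuinely needed. Once you correct these two points, your outline matches the paper's argument.
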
 

Once these are established, the current subsection is purely about the diagonal on 
$$
L_{\Delta_M}= \{(x,\xi; x, -\xi) \in T^*M \times T^*M\},
$$
and Nadler's proof of \cite[Proposition 4.2.2]{Nadler}
applies verbatim without any change even in the context of present paper.

\begin{rem} On one hand, Proposition \ref{prop:generation}
 is a special case restricted to the case of tautologically 
unobstructed immersed Lagrangians of \cite[Theorem 5.15]{fukaya:immersed},
and on the other hand is a generalization of \cite[Proposition 4.2.2]{Nadler} to the case 
of immersed Lagrangians. Indeed Nadler's proof is a special case of Fukaya's general result on the
$A_\infty$ tri-functor associated to Lagrangian correspondence \cite[Theorem 5.15]{fukaya:immersed} 
applied to the case of cotangent bundles. 

Here we just mention that the main geometric comparison result on the relevant 
moduli spaces of pseudoholomorphic curves is based on Corollary \ref{cor:correspondence}.
(See the proof of \cite[Proposition 4.2.2]{Nadler}.)
\end{rem}

\subsection{Nadler's resolution of the diagonal by the standard branes}

This is an adaptation of \cite[Section 4.3]{Nadler} and the following is the main
lemma to be proved therein.

\begin{prop}[Lemma 4.3.1 \cite{Nadler}] The dual brane $\alpha_M(L_{\Delta_M})$ can be expressed as a
twisted complex of the standard branes $L_{\Delta_{\tau_{\mathfrak a}*}}$.
\end{prop}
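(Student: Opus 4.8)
The plan is to adapt Nadler's resolution of the diagonal from \cite[Section 4.3]{Nadler} to the immersed setting, where the essential new point is to work with the product Lagrangian $L_{\Delta_M} \subset T^*M \times T^*M = T^*(M\times M)$ not as an embedded brane but as an object of Fukaya's enlarged category, keeping track of its (possibly nontrivial, once we perturb) clean self-intersection locus and the induced $\bZ_2$-local systems on switching components. First I would fix a triangulation $\cT = \{\tau_{\mathfrak a}\}$ of $M$ fine enough in the sense required by Nadler, and recall the standard branes $L_{\tau_{\mathfrak a}*}$ and the product standard branes $L_{\Delta_{\tau_{\mathfrak a}*}} = L_{\tau_{\mathfrak a}*} \times L_{\tau_{\mathfrak a}*}$ (suitably dualized) in $T^*(M\times M)$. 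The skeleton of the argument is Nadler's: the conormal varieties of the open stars of the simplices cover $T^*_M(M\times M)$ near the diagonal, and a partition-of-unity/Mayer--Vietoris type argument on the nerve of this cover produces a twisted complex whose terms are the $L_{\Delta_{\tau_{\mathfrak a}*}}$ and whose differentials are built from the Floer-theoretic structure maps $\mathfrak m_k$ applied to intersection points coming from the combinatorics of the triangulation.

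Second, I would verify that each geometric ingredient of Nadler's construction survives the passage to clean self-intersections. Concretely: (i) the Hamiltonian isotopies Nadler uses to bring $L_{\Delta_M}$ into the cone over the union of conormals must be chosen so that the self-intersection locus of the moved brane stays clean --- this is automatic for $L_{\Delta_M} = \Delta_{T^*M}$ itself since it is embedded, but the intermediate complexes involve products of standard branes whose self-intersections are the continua described in the introduction, and here Corollary \ref{cor:correspondence} together with Lemma \ref{lem:fiber-product} guarantees the relevant fiber products remain smooth of the expected dimension; (ii) the Floer complexes computing the $A_\infty$ structure maps between the $L_{\Delta_{\tau_{\mathfrak a}*}}$ are the ones built in Theorem \ref{thm:product-quasiisomorphism}, so the differential in the twisted complex is the de Rham model $\mathfrak m_1$ (plus higher corrections) on $CF(\cdot,\cdot;\Lambda_0^R) = \Omega(L(+);\Theta)\,\widehat\otimes\,\Lambda_0^R$, and one checks the switching-component contributions vanish for degree reasons exactly as in Nadler because the relevant intersection points lie in the diagonal components; (iii) gradings and (relative) spin structures: each $L_{\Delta_{\tau_{\mathfrak a}*}}$ inherits a grading and a $[st]$-relative spin structure from that of $L_{\tau_{\mathfrak a}*}$ via the monoidal structure \eqref{eq:monoidal}, and the degree formula $\deg_{\bb{L}_1,\bb{L}_2}(p)+\deg_{\bb{L}_2,\bb{L}_1}(p)=n$ on $T^*(M\times M)$ lets one match Nadler's degree bookkeeping with $n$ replaced by $\dim M$ on each factor.

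Third, I would assemble the twisted complex: following \cite[Section 4.3]{Nadler}, index the terms by chains of simplices (equivalently by the simplices of the barycentric subdivision, or by flags $\tau_{\mathfrak a_0} < \cdots < \tau_{\mathfrak a_k}$), put $L_{\Delta_{\tau_{\mathfrak a_0}*}}$ in the corresponding slot with an appropriate shift, and define the twisted-complex differential $\delta$ using the canonical morphisms $L_{\Delta_{\tau_{\mathfrak b}*}} \to L_{\Delta_{\tau_{\mathfrak a}*}}$ for $\tau_{\mathfrak a} \subset \tau_{\mathfrak b}$ together with the higher $\mathfrak m_k$-products; the Maurer--Cartan equation $\sum \mathfrak m_k(\delta,\ldots,\delta)=0$ is then checked cochain-by-cochain, which reduces to the simplicial identity on the nerve plus the $A_\infty$-relations --- this is where Fukaya's machinery (unitality and $G$-gappedness from Theorem [3.14], and the quasi-isomorphism of Theorem \ref{thm:product-quasiisomorphism}) is invoked to know the products are defined and behave functorially. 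Finally I would show the total twisted complex is quasi-isomorphic to $\alpha_M(L_{\Delta_M})$: the natural evaluation/restriction map from the twisted complex to $\alpha_M(L_{\Delta_M})$ is a quasi-isomorphism because, after passing to cohomology, it becomes the \v{C}ech-to-derived-sections comparison for the cover of $T^*_M(M\times M)$ by the conormals of open stars, exactly Nadler's computation, which is insensitive to whether intermediate objects are immersed.

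The main obstacle I expect is (i) above: controlling the self-intersection loci along the family of Hamiltonian isotopies that move the diagonal, and ensuring that at every stage the perturbed branes stay \emph{clean} (not merely immersed) so that Fukaya's $A_\infty$ structures are defined and the K\"unneth quasi-isomorphism of Theorem \ref{thm:product-quasiisomorphism} applies; relatedly, one must check that the $\bZ_2$-local systems $\Theta_a^-$ on the (higher-dimensional) switching components of the product branes are carried along compatibly under all the structural morphisms, since these data are invisible in Nadler's embedded setting but indispensable here. Once the cleanness is secured uniformly, the rest is a bookkeeping adaptation of \cite[Section 4.3]{Nadler} with $M$ replaced by $M\times M$ and embedded Floer theory replaced by the de Rham model of Section [3]--[4].
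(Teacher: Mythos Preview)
Your approach diverges substantially from the paper's, and in doing so manufactures complications that are not actually present. The paper's proof (following Nadler verbatim) works entirely on the constructible-sheaf side and then transports the result via the microlocalization functor $\mu_{M\times M}$: one first resolves the constant sheaf $\bC_{\Delta_M}$ on the diagonal as an iterated cone of the standard sheaves $d_{\mathfrak a *}\bC_{\Delta_{\tau_{\mathfrak a}}}$ (this is Nadler's Lemma~2.3.1, a purely combinatorial statement in $Sh_{\cT}(M)$), and then applies $\mu_{M\times M}$, which sends $\bC_{\Delta_M}$ to $L_{\Delta_M}$ and each $d_{\mathfrak a *}\bC_{\Delta_{\tau_{\mathfrak a}}}$ to $L_{\Delta_{\tau_{\mathfrak a}*}}$. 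No direct Floer-theoretic \v{C}ech construction, no Hamiltonian isotopies whose self-intersection loci must be controlled, and no immersed Lagrangians enter at any stage of this particular proposition: every brane appearing here is \emph{embedded}.

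That is the point you are missing. The diagonal $L_{\Delta_M} \subset T^*M \times T^*M$ is the embedded diagonal copy of $T^*M$, and each $L_{\Delta_{\tau_{\mathfrak a}*}}$ is the (embedded) standard brane attached to the submanifold $\Delta_{\tau_{\mathfrak a}} \subset M \times M$; it is \emph{not} the product $L_{\tau_{\mathfrak a}*} \times L_{\tau_{\mathfrak a}*}$ as you write --- the paper identifies its underlying Lagrangian explicitly as the fiber sum $T^*_{\Delta_{\tau_{\mathfrak a}}}(M\times M) + \Gamma_{p_2^* df_{\mathfrak a}}$. Consequently your declared main obstacle (cleanness of self-intersections along the isotopies, compatibility of the $\bZ_2$-local systems $\Theta_a^-$ on switching components) is a non-issue: there are no switching components. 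The clean-immersed machinery of Fukaya is needed elsewhere in the paper --- for the test objects $\bP$, $\bL$ in Proposition~\ref{prop:generation} and in the K\"unneth step --- but the resolution of the diagonal itself is insulated from it and is imported unchanged from the embedded setting.
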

\begin{proof}[Outline of the proof]
We briefly summarize the main arguments used in the proof. Fix a triangulation $\cT = \{\tau_{\mathfrak a}\}$ of $M$,
and consider the $\cT$-constructible dg derived category $Sh_{\cT}(M)$.
Consider the inclusion $j_{\mathfrak a}: \tau_{\mathfrak a} \hookrightarrow M$, and
the corresponding standard sheaves $j_{\mathfrak{a}*}\bC_{\tau_{\mathfrak a}}$.

Then we summarize a few result established by Nadler in \cite{Nadler}.

\begin{lem}[Lemma 2.3.1 \cite{Nadler}]
Any object of $Sh_\cT(M)$, in particular the constant sheaf $\bC_M$,
as an iterated cone of maps among the standard sheaves $j_{\mathfrak{a}*}\bC_{\tau_{\mathfrak a}}$.
\end{lem}

We consider the inclusion
$$
d_{\mathfrak{a}}: \Delta_{\tau_{\mathfrak{a}}} \hookrightarrow \Delta_M.
$$
\begin{lem}[Section 4.3 \cite{Nadler}] The following microlocalizations hold:
\begin{enumerate}
\item The diagonal brane $L_{\Delta_M}$ is the standard object of $\Fuk(T^*M \times T^*M)$
obtained as 
$$
L_{\Delta_M} \cong \mu_{M \times M}(\bC_{\Delta_M}).
$$
\item The standard object $L_{\Delta_{\tau_{\mathfrak{a}*}}}$ is obtained as 
$$
L_{\Delta_{\tau_{\mathfrak a}*}} \cong \mu_{M \times M}(d_{\mathfrak{a}*}\bC_{\Delta_M}).
$$
\end{enumerate}
\end{lem}

We fix a non-negative function 
$$
m_{\mathfrak a}: M \to \bR
$$
that vanishes precisely on $\del \tau_{\mathfrak a} \subset M$,
and consider the function $f_{\mathfrak a} : \tau_{\mathfrak a} \to \bR$ given by
$$
f_{\mathfrak a}: = \log m_{\mathfrak a}|_{\mathfrak a}.
$$
Then the underlying Lagrangian of $L_{\Delta_{\tau_{\mathfrak a}}!}$ can be identified with the fiber sum
$$
T^*_{\Delta_{\tau_{\mathfrak a}}}(M \times M) + \Gamma_{p_2^*df_{\mathfrak a}} 
\subset T^*(M \times M),
$$
where $\Gamma_{p_2^*df_{\mathfrak a}}$ denotes the graph of the pullback $p_2^*df_{\mathfrak a}$
via the projection $p_2: M \times M \to M$. This ends the summary of the proof of the proposition.
\end{proof}

\subsection{Moving the diagonal}

The following is the main ingredient in the proof of Theorem \ref{thm:generation}.
By going through the work of Nadler \cite{Nadler}, we find that for the tautologically unobstructed category, the microlocalization functor is still essentially surjective.

\begin{prop}[Compare with Proposition 4.4.1 \cite{Nadler}] Let $Y = T^*M$.
For any tautologically unobstructed test objects $\bL,\bP$,
the $\Fuk(Y)$-module $\Hom_{\Fuk(Y)}(\bb{P},\bb{L})$ can be decomposed into a twisted complex of standard modules
		$$
		\Hom_{\Fuk(Y)}(\alpha_{M}(\bb{L}),L_{\{x_{\mathfrak a}\}})\otimes 
\Hom_{\Fuk(Y)}(\bb{P},L_{\tau_{\mathfrak a}*}).
		$$
\end{prop}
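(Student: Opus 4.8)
The plan is to assemble this proposition from the three ingredients that have already been put in place, following Nadler's four-part strategy in \cite[Section 4.5]{Nadler} now that the immersed (clean-self-intersection) analogues of each input are available. Concretely, the three inputs are: the K\"unneth-type quasi-isomorphism \eqref{eq:product-isomorphism} for product branes (which rests on Theorem \ref{thm:product-quasiisomorphism}), the diagonal-correspondence identification of Proposition \ref{prop:generation}, and the resolution of the (dual of the) diagonal brane $\alpha_M(L_{\Delta_M})$ by standard branes $L_{\Delta_{\tau_{\mathfrak a}*}}$ from the preceding subsection. The output should be a twisted-complex decomposition of $\Hom_{\Fuk(Y)}(\bb{P},\bb{L})$ whose terms are the claimed tensor products of standard modules.

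First I would rewrite $\Hom_{\Fuk(Y)}(\bb{P},\bb{L})$ using Proposition \ref{prop:generation}: since $\bP,\bL$ are tautologically unobstructed (hence in $\Fuk^0(T^*M)$), we get a functorial quasi-isomorphism
\[
\Hom_{\Fuk(Y)}(\bb{P},\bb{L}) \cong \Hom_{\Fuk(Y \times Y)}\bigl(L_{\Delta_M},\, \bb{P} \times \alpha_M(\bb{L})\bigr).
\]
Next I would apply the resolution of the diagonal: the dual brane $\alpha_M(L_{\Delta_M})$ is a twisted complex with terms $L_{\Delta_{\tau_{\mathfrak a}*}}$, and dualizing (the $\alpha_M$ operation is an involutive anti-equivalence on the relevant category, so a twisted-complex expression for $\alpha_M(L_{\Delta_M})$ yields one for $L_{\Delta_M}$ up to the corresponding grading shifts) lets me replace $L_{\Delta_M}$ in the displayed hom-space by an iterated cone of the $L_{\Delta_{\tau_{\mathfrak a}*}}$. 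Because $\Hom$ sends twisted complexes in the first variable to twisted complexes of the corresponding hom-spaces, this produces a twisted complex with terms
\[
\Hom_{\Fuk(Y \times Y)}\bigl(L_{\Delta_{\tau_{\mathfrak a}*}},\, \bb{P} \times \alpha_M(\bb{L})\bigr).
\]
Then I would identify each such term: using the structure of the standard brane $L_{\Delta_{\tau_{\mathfrak a}*}}$ described at the end of the previous subsection (the fiber-sum picture over $\tau_{\mathfrak a} \times \tau_{\mathfrak a}$, or rather the ``$d_{\mathfrak a*}$'' microlocal picture) together with the K\"unneth quasi-isomorphism \eqref{eq:product-isomorphism} applied in the product $T^*M \times T^*M$, this factors as
\[
\Hom_{\Fuk(Y)}\bigl(\alpha_M(\bb{L}), L_{\{x_{\mathfrak a}\}}\bigr) \otimes \Hom_{\Fuk(Y)}\bigl(\bb{P}, L_{\tau_{\mathfrak a}*}\bigr),
\]
which is exactly the claimed standard module. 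Finally I would record that all identifications are functorial, so the twisted-complex differentials assemble coherently.

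The main obstacle I expect is \emph{not} the formal bookkeeping but checking that the K\"unneth-type isomorphism and the diagonal-resolution arguments, which Nadler carries out for exact \emph{embedded} branes, genuinely go through when $\bb{P}\times\alpha_M(\bb{L})$ and the standard branes $L_{\Delta_{\tau_{\mathfrak a}*}}$ have only \emph{clean} (not transverse) self-intersection along the continuum $(\Self(\bb{P})\times\alpha_M(\bb{L})) \cup (\bb{P}\times\Self(\alpha_M(\bb{L})))$. The precise point is that the relevant moduli spaces of holomorphic strips/disks with boundary on these product Lagrangians must be related to products of moduli spaces for the factors in a way compatible with Kuranishi structures and switching-component local systems $\Theta_a^-$; this is exactly where Fukaya's enlarged framework \cite{fukaya:immersed} is needed, and where Corollary \ref{cor:correspondence} (the fiber-product identification $(\widetilde P \times \tildeL)\times_{X\times X}\Delta_X \cong \widetilde P \times_X \tildeL$) does the geometric heavy lifting. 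A secondary technical check is the non-compactness/cylindrical-end issue: one must ensure the standard branes $L_{\tau_{\mathfrak a}*}$ and $L_{\{x_{\mathfrak a}\}}$ (and the conical wrapping used to ``move the diagonal'') interact properly with the cylindrical structure at infinity of $T^*M$, so that all Floer complexes are well-defined and the invariance under moving the diagonal holds. Once these two points are granted — and they follow from the results quoted earlier in the paper — the proposition is a direct concatenation of Proposition \ref{prop:generation}, the diagonal resolution, and \eqref{eq:product-isomorphism}, exactly paralleling \cite[Proposition 4.4.1]{Nadler}.
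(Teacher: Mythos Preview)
Your overall strategy matches the paper's: both defer to Nadler's argument and claim that the immersed/clean-intersection analogues of the inputs (K\"unneth, diagonal correspondence, resolution of the diagonal) suffice. The paper's own proof is in fact terser than yours---it simply records that Nadler's argument goes through verbatim once the coefficient field is $\bb{K}$ and the branes are allowed to be tautologically unobstructed immersions.

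There is, however, one point where your sketch blurs the actual content of the proposition. In your step~3 you write that the factorization
\[
\Hom_{\Fuk(Y\times Y)}\bigl(L_{\Delta_{\tau_{\mathfrak a}*}},\, \bb{P}\times\alpha_M(\bb{L})\bigr)\;\cong\;\Hom_{\Fuk(Y)}(\alpha_M(\bb{L}),L_{\{x_{\mathfrak a}\}})\otimes\Hom_{\Fuk(Y)}(\bb{P},L_{\tau_{\mathfrak a}*})
\]
follows from the K\"unneth isomorphism \eqref{eq:product-isomorphism} together with ``the structure of the standard brane''. But \eqref{eq:product-isomorphism} applies only when \emph{both} entries are product branes, and $L_{\Delta_{\tau_{\mathfrak a}*}}$ is not a product: its underlying Lagrangian sits over the diagonal simplex $\Delta_{\tau_{\mathfrak a}}\subset M\times M$, not over $\{x_{\mathfrak a}\}\times\tau_{\mathfrak a}$. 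The step that earns the section its name---``moving the diagonal''---is precisely the non-characteristic Hamiltonian isotopy in \cite[Proposition~4.4.1]{Nadler} that slides $L_{\Delta_{\tau_{\mathfrak a}*}}$ to (an object quasi-isomorphic to) the genuine product $L_{\{x_{\mathfrak a}\}*}\times L_{\tau_{\mathfrak a}*}$, after which K\"unneth legitimately applies. Your outline absorbs this into an unspecified appeal to ``the fiber-sum picture'', but that picture (the conormal $T^*_{\Delta_{\tau_{\mathfrak a}}}(M\times M)$ shifted by a graph) still lives over the diagonal and does not by itself yield a product decomposition. Since this isotopy argument takes place entirely among embedded standard branes, it is unaffected by the passage to the immersed setting---which is why the paper can simply cite Nadler---but your reconstruction should name it rather than fold it into K\"unneth.
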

\begin{proof}[Outline of the proof]
		In \cite{Nadler}, Nadler showed that for a 
		  a fine enough stratification $\cu{T}=\{\tau_a\}$ gives of $M$ so that $L^{\infty}\subset\Lambda_{\cu{T}}^{\infty}$ and $\alpha_{M}:T^*M\to T^*M$ is the involution $(x,\xi)\mapsto(x,-\xi)$.
		(Nadler's proof relies on the result that the diagonal of $T^*M^- \times T^*M$ can be
		resolved by the standard branes via the argument of utilizing the Lagrangian correspondence functor, 
which holds the case independent of the coefficient field.)
		The only difference here is that the (finite-dimensional) vector spaces $\Hom_{\Fuk(Y)}(\alpha_{M}(\bL),L_{\{x_a\}}),\Hom_{\Fuk(Y)}(\bP,L_{\tau_a*})$ and morphisms are now defined 
over $\bb{K}$ when $\bL,\, \bP$ are tautologically unobstructed immersed Lagrangian branes.
\end{proof}

\subsection{Wrap up of the proof of Theorem \ref{thm:generation}}

We adapt the arguments of \cite[Section 4.5]{Nadler} here. We fix an object $\bL$ of $\Fuk^0(T^*M)$. We need to show that 
for any text object $\bP$ of $\Fuk^0(T^*M)$, one can functorially express the Floer
complex $hom_{\Fuk(T^*M)}(\bP,\bL)$ as a twisted complex of the Floer complexes $hom_{\Fuk(T^*M)}(\bP,L_{S*})$ of 
standard branes $L_{S*} \hookrightarrow T^*M$ associated to submanifolds $S \hookrightarrow M$.

First by Proposition \ref{prop:generation}, there is a functorial quasi-isomorphism 
$$
hom_{\Fuk(T^*M)}(\bP,\bL) \cong hom_{\Fuk(T^*M \times T^*M)}(L_{\Delta_M}, \bL \times \alpha_M(\bP)).
$$
We can rewrite this into
$$
hom_{\Fuk(T^*M)}(\bP,\bL) \cong hom_{\Fuk(T^*M \times T^*M)}(\alpha_M(\bL) \times \bP, L_{\Delta_M})
$$
by applying the anti-equivalence $\alpha_M$.

Next, we fix a triangulation $\cT = \{\tau_{\mathfrak a}\}$ of $M$ along with the induced triangulation
$\Delta_\cT = \{\Delta_{\tau_{\mathfrak a}}\}$ of $\Delta_M \subset M \times M$. Then we can express 
$\alpha_M(L_{\Delta_M})$ as a twisted complex of the standard branes 
$L_{\Delta_{\tau_{\mathfrak a}}*}$.
We assume further that $\cT$ is fine enough so that $L^\infty \subset \Lambda_\cT^\infty$.
Now by Proposition \ref{prop:generation}, there is a functorial quasi-isomorphism of complexes
$$
hom_{\Fuk(T^*M \times T^*M)}(\alpha_M(\bL) \times \bP, \alpha_M(\bP))
\cong
hom_{\Fuk(T^*M)}(\alpha_M(\bL),L_{\Delta_{\tau_{\mathfrak a}*}}) \otimes
hom_{\Fuk(T^*M)}(\bP_0,L_{\Delta_{\tau_{\mathfrak a}*}}).
$$
Therefore we have expressed the Floer complex $hom_{\Fuk(T^*M)}(\bP,\bL)$ as a twisted complex
with terms the Floer complexes $hom_{\Fuk(T^*M)}(\alpha_M(\bP),L_{\Delta_{\tau_{\mathfrak a}*}})$.
This completes the proof of Theorem \ref{thm:generation}.

\appendix

\section{Filtered $A_\infty$ categories and their strictifications}
\label{sec:filtered-fukaya-category}

In this appendix, we recall the general definition of curved filtered $A_\infty$ categories 
 from \cite{fooo:book1}, \cite{fooo:anchored} and their  \emph{strictifications}: 
 We refer readers to \cite{fukaya:immersed},
 \cite[Section 7.1]{oh:kias-exposition} for detailed explanation on their constuction.

\subsection{Curved filtered $A_\infty$ categories}

\noindent Let $R$ be a commutative ring with unit. Denote by $\Lambda_0^R$ the universal Novikov ring over $R$, i.e., the set of formal sums $\sum_{i=1}^\infty a_i T^{\lambda_i}$, where $a_i \in R$
with $a_0 \neq 0$ and the maps $i \to \lambda_i$ satisfy
$$
0 \leq \lambda_1 < \cdots < \lambda_i < \lambda_{i+1} < \cdots
$$
as $i \to \infty$, \emph{unless $a_i = 0$ for all but finite $i$'s}. We then denote by
$\Lambda^R$ its field of fractions, and $\Lambda^R_+$ its maximal ideal.

\begin{rem}
\begin{enumerate}
\item
In our geometric application of the Fukaya category for the general compact symplectic manifold, $R$ will be
the polynomial ring $\bC[e,e^{-1}]$ or $\bR[e,e^{-1}]$ over a formal variable $e$ of degree 2 which encodes
the Maslov index.
\item In the context of homological mirror symmetry of Calabi--Yau manifolds, where the $A$-model is the Fukaya category
generated by Maslov index 0 Lagrangian branes, $R$ will be $\bC$.
\end{enumerate}
\end{rem}

\begin{defn}[Energy filtration]
The ring $\Lambda_0^R$ carries a natural filtration
\be\label{eq:filtration-Lambda0}
F^\lambda \Lambda_0^R = \left\{\sum_{i=1}^\infty a_i T^{\lambda_i} \in \Lambda_0^R
\, \Big|\,  a_i = 0,
\, \forall  \lambda_i < \lambda \right\}.
\ee
We call this filtration the \emph{energy filtration}.
 We define the valuation
\be
\mathfrak \nu \left(\sum_{i=1}^\infty a_i T^{\lambda_i}\right): = \lambda_1.
\ee
(Recall from above that we assume $a_0 \neq 0$.)
As usual, we set $ \nu(0): = \infty$.
\end{defn}

This filtration provides a natural non-Archimedean topology induced by the metric defined by
$$
d_{\mathfrak \nu} (\mathfrak a_1, \mathfrak a_2) : = e^{-\nu(\mathfrak a_2 - \mathfrak a_1)}.
$$
The function $\nu$ satisfies the non-Archimedean triangle inequality
$$
\mathfrak \nu (\mathfrak a_1 + \mathfrak a_2) \geq \min \{\nu(\mathfrak a_1),  \nu(\mathfrak a_2)\}.
$$
\begin{defn}[Monoid and Novikov ring]
\begin{enumerate}
\item  A \emph{discrete monoid} $G$ is a discrete subset of $\bR_{\geq 0}$
such that $0 \in G$ and satisfies that if $g_1, \, g_2 \in G$, then $g_1 + g_2 \in G$.
\item For a given discrete monoid $G \subset \bR_{\geq 0}$, we define $\Lambda_G^R$
to be the subring of the universal Novikov ring $\Lambda_0^R$  given by
\be\label{eq:Lambda-RG}
\Lambda_{G ,0}= \left\{\sum_i a_i T^{\lambda_i}  \in \Lambda_0 \,
\Big|\,  \lambda_i \in G \right\}.
\ee
\end{enumerate}
We denote by $\Lambda_G$ the Novikov field associated to $\Lambda_{G,0}$.
\end{defn}

Next, we consider a free graded module over $R$, which we denote by
$\overline C$, and
$$
C = \overline C \otimes_R \otimes \Lambda_0.
$$
\begin{hypo}\label{hypo:Cbar}
We assume
that either $C$ is a de Rham complex, or $\overline C^k$ is finitely generated for each $k$
or a combination of them.
\end{hypo}
Note that for the purpose of constructing the Fukaya category generated by 
immersed Lagrangian branes with clean self-intersections, the associated complex
is a finite (completed) direct sum of the two types. (See Definition \ref{defn:CFL}.)
The second type occurs when some components $L(a)$ of $\text{\rm Self}(L)$ are
zero-dimensional, i.e., transversal, which can be identified with the de Rham complex
over the zero dimensional manifolds.

\begin{defn}[$G$-gapped] Let $G$ be a discrete monoid and $C$ be module over 
$\Lambda_0$. 
\begin{enumerate}
\item An element $x$ of $C$ is called  \emph{$G$-gapped} if it is expressed as a sum
(not necessarily finite)
$$
x = \sum_{g \in G}T^g x_g
$$
with $x_g \in \overline C$.
\item An $\Lambda_0^R$ homomorphism $\varphi$ between $C_1, \, C_2$ is called \emph{$G$-gapped} if we can decompose  $\varphi$ into a sum (not necessarily finite)
\be\label{eq:gapped-varphi}
\varphi = \sum_{g \in G} T^g \varphi_g
\ee
for  $R$ module homomorphisms $\varphi_g: \overline C_1 \to \overline C_2$.

For a $G$-gapped homomorphism $\varphi$, we write $\overline \varphi: = \varphi_0$
for $\varphi_0$ in \eqref{eq:gapped-varphi} and call it the \emph{$R$-reduction of $\varphi$}.
\end{enumerate}
\end{defn}
We note that the expression \eqref{eq:gapped-varphi} is equivalent to the property that
$\varphi$ maps $G$-gapped elements of $C_1$ to $G$-gapped ones in $C_2$.

\begin{defn}[Definition 2.2 \cite{fukaya:immersed}]
A \emph{non-unital curved filtered $A_\infty$
category} $\mathscr C$ consist of the following: A set $\mathfrak{Ob}(\mathscr C)$, the set of objects,
and a graded completed free $\Lambda_0$ module $\mathscr(c_1,c_2)$ for each pair of
$c_1, \, c_2 \in \mathfrak{Ob}(\mathscr C)$, and the collection of operations
$$
\mathfrak m_k: \mathscr C[1](c_0,c_1) \widehat \otimes \cdots \widehat \otimes \mathscr C[1](c_{k-1},c_k)
\longrightarrow \mathscr C[1](c_0,c_k).
$$
\begin{enumerate}
\item The operator $\mathfrak m_k$ for $k \geq 0$ preserves the filtration
$$
\mathfrak m_k(\mathfrak F^\lambda(\mathscr C[1](c_0,c_1) \widehat \otimes \cdots \widehat \otimes
\mathscr C[1](c_{k-1},c_k)) \subset \mathscr F^\lambda(\mathscr C[1](c_0,c_k)).
$$
\item We have
$$
\mathfrak m_0(1) \equiv 0 \mod T^\epsilon$$
for some $\epsilon > 0$.
\item $\mathfrak m_k$ satisfies the $A_\infty$ relation.
\end{enumerate}
\end{defn}
Note in a curved  filtered $A_{\infty}$ category the operator $\mathfrak m_0$ may not be $0$.

\subsection{Strictification of curved filtered $A_\infty$ category}

\noindent Recall that in a curved  filtered $A_{\infty}$ category the operator $\mathfrak m_0$
may not be $0$. In general, non-zero `curvature' gives rise to a `BRST anomaly' i.e.,
$\mathfrak m_1 \circ \mathfrak m_1 \neq 0$.  There is a way of extracting an
\emph{anomaly-free} or \emph{unobstructed} filtered $A_\infty$ category
first by shrinking the set of objects and then by decorating each object
by a Maurer--Cartan element. We call this whole process the \emph{strictification
of a curved filtered $A_\infty$ category}.

\begin{defn}[Maurer--Cartan moduli space] Let $c \in \mathfrak{Ob}(\mathscr C)$.
\begin{enumerate}
\item We define
$$
\widetilde {\mathfrak M}(c;\Lambda_+)
$$
to be the set of $b \in \mathscr C^1 = \mathscr C[1]^0$ satisfying the following:
\begin{enumerate}
\item $b \equiv 0 \mod \Lambda_+$.
\item $b$ satisfies
\be\label{eq:b-MC}
\sum_{k=0}^\infty \mathfrak m_k(b, \ldots, b) = 0.
\ee
We call such a $b$ a \emph{bounding cochain}.
\end{enumerate}
\item We define ${\mathfrak M}(c;\Lambda_+) := \widetilde {\mathfrak M}(c;\Lambda_+)/\sim$
the set of gauge-equivalence classes, and call
$$
c \mapsto {\mathfrak M}(c;\Lambda_+); \quad \mathfrak{Ob}(\mathscr C) \longrightarrow \mathfrak{Set}
$$
the \emph{Maurer--Cartan moduli space} of $c \in \mathfrak{Ob}(\mathscr C)$.
\item By definition, there exists a scalar $\PO(c) \in R$ such that
\be
  {\mathfrak m}^{{\mathscr C}[1]}_{0}(1) = \PO(c) \cdot \id_{c}
\ee
for any $c \in \mathfrak{Ob}(\mathscr C)$
We call the valuation $\nu(\PO(c)$ the \emph{potential level} of $c$.
\end{enumerate}
\end{defn}

\begin{defn}\label{defn:weakly-unobstructed-c}
An object $c$  is \emph{weakly unobstructed} if $\widetilde {\mathfrak M}(c;\Lambda_+) \neq \emptyset$.
\end{defn}
 Note that the $A_{\infty}$ equation for $d=2$ reads
  \be
 {\mathfrak m}_{1}^{{\mathscr C}}\left( {\mathfrak m}_{1}^{{\mathfrak C}}(a)\right) + {\mathfrak m}_{2}^{{\mathscr C}}\left( {\mathfrak m}_{0}^{{\mathfrak C}}(1), a\right) + (-1)^{|a|'}  {\mathfrak m}_{2}^{{\mathscr C}}\left(a,  {\mathfrak m}_{0}^{{\mathfrak C}}(1)\right) = 0.
  \ee
 \begin{lem} The endomorphism $   {\mathfrak m}_{1}^{{\mathscr C}}$  defines a differential on the  morphism spaces ${\mathscr C}(c_0,c_1)$ provided
 $c_0$ and $c_1$ are weakly unobstructed and $\PO(c_0) = \PO(c_1)$.
 \end{lem}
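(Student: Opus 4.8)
The plan is to read the statement off the $d=2$ instance of the $A_\infty$ relations displayed just above, feeding in only two further ingredients: the defining property of the potential scalar $\PO(c)$ of a weakly unobstructed object, and the (strict) unitality of the strictified category $\mathfrak F(X)$.

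First I would fix $a \in \mathscr{C}(c_0,c_1)$ and write out the $d=2$ relation
$$
\mathfrak{m}_1\bigl(\mathfrak{m}_1(a)\bigr) + \mathfrak{m}_2\bigl(\mathfrak{m}_0(1),a\bigr) + (-1)^{|a|'}\,\mathfrak{m}_2\bigl(a,\mathfrak{m}_0(1)\bigr) = 0 .
$$
Composability of $\mathfrak{m}_2$ forces the first occurrence of $\mathfrak{m}_0(1)$ to be the curvature element of $\mathscr{C}(c_0,c_0)$ and the second to be that of $\mathscr{C}(c_1,c_1)$. Since $c_0$ and $c_1$ are weakly unobstructed, the defining property of $\PO$ gives $\mathfrak{m}_0(1) = \PO(c_0)\cdot \id_{c_0}$ in $\mathscr{C}(c_0,c_0)$ and $\mathfrak{m}_0(1) = \PO(c_1)\cdot \id_{c_1}$ in $\mathscr{C}(c_1,c_1)$; substituting these and pulling the scalars out by $\Lambda_0$-bilinearity of $\mathfrak{m}_2$ turns the relation into
$$
\mathfrak{m}_1\bigl(\mathfrak{m}_1(a)\bigr) + \PO(c_0)\,\mathfrak{m}_2\bigl(\id_{c_0},a\bigr) + (-1)^{|a|'}\PO(c_1)\,\mathfrak{m}_2\bigl(a,\id_{c_1}\bigr) = 0 .
$$

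Next I would invoke unitality of $\mathfrak F(X)$: each of $\mathfrak{m}_2(\id_{c_0},a)$ and $\mathfrak{m}_2(a,\id_{c_1})$ equals $a$ up to the standard Koszul sign, and — this is the one point where the explicit conventions of \cite{fooo:book1} enter — these signs are arranged precisely so that $\mathfrak{m}_2(\id_{c_0},a) + (-1)^{|a|'}\mathfrak{m}_2(a,\id_{c_1}) = 0$, which is exactly what forces $\mathfrak{m}_1^2 = 0$ for a single weakly unobstructed object, where $\PO(c_0) = \PO(c_1)$ automatically. Using this identity to collapse the two unit-insertion terms, the relation becomes
$$
\mathfrak{m}_1\bigl(\mathfrak{m}_1(a)\bigr) = \pm\bigl(\PO(c_0) - \PO(c_1)\bigr)\,a .
$$
Under the hypothesis $\PO(c_0) = \PO(c_1)$ the right-hand side vanishes for every $a$, hence $\mathfrak{m}_1 \circ \mathfrak{m}_1 = 0$ on $\mathscr{C}(c_0,c_1)$; and since $\mathfrak{m}_1$ preserves the energy filtration, it is a filtered differential.

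I expect the only genuine bookkeeping here to be the sign check in the third step — verifying against \cite{fooo:book1} that the two unit-insertion terms carry opposite signs, so that after substituting the two (possibly distinct) curvature scalars the morphism $a$ survives with coefficient $\pm(\PO(c_0)-\PO(c_1))$ rather than cancelling or doubling up. Everything else is a direct substitution into an $A_\infty$ relation already on the page; as a sanity check, taking $c_0 = c_1$ recovers the familiar fact that a weakly unobstructed object carries the honest differential $\mathfrak{m}_1$.
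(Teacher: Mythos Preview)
Your proposal is correct and is exactly the argument the paper has in mind: the paper does not spell out a proof beyond displaying the $d=2$ $A_\infty$ relation immediately before the lemma, and your write-up simply fills in the substitution of $\mathfrak m_0(1)=\PO(c_i)\cdot\id_{c_i}$ and the unitality cancellation that the reader is meant to supply. One minor remark: in the paper the identity $\mathfrak m_0(1)=\PO(c)\cdot\id_c$ is recorded as part of the Maurer--Cartan definition (item (3)) rather than as a consequence of weak unobstructedness per se, so you can cite that directly instead of invoking the strictified category $\mathfrak F(X)$.
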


We are now ready to provide the definition of the strictification $\mathscr C$.

\begin{defn}[Strictification]
\label{defn:strictification}
Suppose that  $\mathscr C$ is a non-unital curved filtered $A_\infty$ category.  The
\emph{strictification}, denoted by ${\mathscr C}^s$, of $\mathscr C$ is defined as follows:
\begin{enumerate}
\item An object of ${\mathscr C}^s$ is a pair
$$
(c,b) \in \mathfrak{Ob}(\mathscr C), \quad b \in \widetilde{\mathfrak M}(\mathscr(c,c);\Lambda_+).
$$
\item We define the morphism space ${\mathscr C}^s((c,b),(c',b'))$ to be
$$
{\mathscr C}^s((c,b),(c',b')): = {\mathscr C}(c,c').
$$
\item We define the structure maps ${\mathfrak m}_k^{\vec b}$ by
\be\label{eq:mkb}
{\mathfrak m}_k^{\vec b}(x_1, \cdots, x_k)
: = \sum_{\ell_0,\ldots, \ell_k}
{\mathfrak m}_{k + \ell_0 + \cdots + \ell_k}\left(b_0^{\ell_0}, x_1, b_1^{\ell_1}, \cdots,
b_{k-1}^{\ell_{k-1}}, x_k, b_k^{\ell_k}\right)
\ee
for given $(c_i,b_i) \in \mathfrak{Ob}({\mathscr C}^s)$ for $i = 0, \ldots, k$ and
$$
x_i \in {\mathscr C}^s((c_{i-1},b_{i-1}),(c'_i,b'_i)): = {\mathscr C}(c_{i-1},c'_i).
$$
Here we write $\vec b = (b_0,\cdots, b_k)$.
\end{enumerate}
\end{defn}

We now denote the strictification of the curved filtered $A_\infty$ category $\mathscr C$ by
$$
\mathfrak C = {\mathscr C}^s
$$
as a curvature-free filtered $A_\infty$ category. By definition each object
$X = (c,b)$ carries its potential level $\mathfrak{PO}(X): = \mathfrak{PO}(c,b)$.

We divide the collection ${\bf L}$ of objects into the disjoint union
\be\label{eq:L-decompose}
{\bf L} = \bigcup_{\lambda} {\bf L}_{\lambda},
\ee
such that ${\bf L}$ is a subset of 
$$
 \{(L_\kappa, \sigma_\kappa, b_\kappa) \mid L_\kappa \in \mathscr{L}, \,\, b_\kappa \in \mathcal M_{\rm weak}(\Omega(L_\kappa),\mathfrak b, \sigma_\kappa;\Lambda_0)\}
$$
and 
$$
 {\bf L}_{\lambda} = \{ (L, \sigma_L , b) \in {\bf L} \mid \mathfrak{PO}_{\mathfrak b}(b) = \lambda\}
$$
is the iso-level sub-collection thereof with potential level $\lambda$. (See Definition
\ref{defn:weakly-unobstructed-c}
for the definition.)

$\mathfrak{Fuk}_\lambda$ is the full subcategory of $\cL$ whose object is an element of
${\bf L}_{\lambda}$. Then if $(L,\sigma_L, b),(L',\sigma_{L'},b')$ are objects of ${\bf L}_{\lambda}$
we have $\mathfrak m_1 \circ \mathfrak m_1 =0$ on $CF((L,b),(L',b');\Lambda_0)$.
When we consider the filtered $A_{\infty}$ category $\mathfrak{Fuk}_\lambda$
we {\it re-define} $\mathfrak m_0$ so that $\mathfrak m_0 = 0$ as mentioned before.
Then the facts that $\mathfrak{PO}_{\mathfrak b}(b)$ is independent of $(L,\sigma_L, b) \in {\bf L}_{\lambda}$ and that
$\mathfrak m_0(1)$  is
proportional to the unit imply that the $A_{\infty}$ formula holds with this
re-defined $\mathfrak m_0$. We summarize the above discussion into the following:
\begin{prop}\label{prop:curvature-free}
For each fixed $\lambda$,
$\mathfrak{Fuk}_\lambda$ forms a curvature-free filtered $A_{\infty}$ category.
\end{prop}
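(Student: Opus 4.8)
The plan is to show that, after restricting the strictification $\mathfrak{C}=\mathscr{C}^s$ to the iso-level collection ${\bf L}_\lambda$, simply deleting the curvature operation $\mathfrak{m}_0$ leaves all filtered $A_\infty$ relations intact; curvature-freeness is then built in, and the energy filtration survives trivially. The two inputs I would use are: (i) by construction of the strictification, each object $X=(L,\sigma_L,b)$ has curvature $\mathfrak{m}_0^{\vec b}(1)=\PO_{\mathfrak b}(b)\cdot\id_X$, so \emph{every} object of ${\bf L}_\lambda$ has the \emph{same} curvature $\lambda\cdot\id$; and (ii) $\mathfrak{C}$ is unital (as in \cite{fukaya:immersed}), so the unit obeys $\mathfrak{m}_2(\id_X,x)=x$, $\mathfrak{m}_2(x,\id_X)=(-1)^{|x|}x$, and $\mathfrak{m}_k(\dots,\id_X,\dots)=0$ whenever $k\neq 2$.

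First I would fix objects $X_0,\dots,X_k$ of ${\bf L}_\lambda$ and morphisms $x_i\in\mathfrak{C}(X_{i-1},X_i)$, and split the $A_\infty$ relation of $\mathfrak{C}$ for the tuple $(x_1,\dots,x_k)$ into a sum $\Sigma_{\geq 1}+\Sigma_0=0$, where $\Sigma_{\geq 1}$ collects the terms whose inner operation $\mathfrak{m}_{k_2}$ has $k_2\geq 1$ and $\Sigma_0$ collects the $k+1$ terms in which some $\mathfrak{m}_0^{\vec b}(1)$ is inserted into one of the $k+1$ slots of $\mathfrak{m}_{k+1}$. Since the re-defined operations coincide with those of $\mathfrak{C}$ for all $k\geq 1$ and have $\mathfrak{m}_0\equiv 0$, the expression $\Sigma_{\geq 1}$ is exactly the left-hand side of the $A_\infty$ relation for the re-defined operations; hence it suffices to prove $\Sigma_0=0$, which then forces $\Sigma_{\geq 1}=0$.

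To evaluate $\Sigma_0$ I would insert, at the $j$-th slot, the element $\mathfrak{m}_0^{X_j}(1)=\lambda\cdot\id_{X_j}$. For $k\geq 2$ each summand is $\lambda\,\mathfrak{m}_{k+1}(x_1,\dots,\id_{X_j},\dots,x_k)$, which vanishes by the unitality axiom since $k+1\geq 3$, so $\Sigma_0=0$; for $k=0$ there is nothing to check; for $k=1$ one has $\Sigma_0=\lambda\bigl(\mathfrak{m}_2(\id_{X_0},x_1)+(-1)^{|x_1|'}\mathfrak{m}_2(x_1,\id_{X_1})\bigr)$, which vanishes by the two $\mathfrak{m}_2$-unital identities — equivalently, $\Sigma_0=0$ here is the statement $\mathfrak{m}_1^{\mathfrak{C}}\circ\mathfrak{m}_1^{\mathfrak{C}}=0$ on $\mathfrak{C}(X_0,X_1)$ recorded in the Lemma immediately preceding Definition \ref{defn:strictification}, whose hypothesis $\PO(X_0)=\PO(X_1)$ is satisfied because $X_0,X_1\in{\bf L}_\lambda$. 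Thus $\Sigma_0=0$ in all cases, and with the reduction of the previous paragraph the $A_\infty$ relations hold for the re-defined operations. It then remains only to note that the operations $\mathfrak{m}_k$ for $k\geq 1$, being unchanged, still preserve the energy filtration, are $G$-gapped, and are unital, while $\mathfrak{m}_0\equiv 0$ preserves the filtration trivially and makes the category curvature-free; this establishes Proposition \ref{prop:curvature-free}. I expect the only genuinely delicate point to be the sign bookkeeping in the $k=1$ cancellation, but that is precisely the content of the cited Lemma and requires no further input.
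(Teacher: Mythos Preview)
Your proposal is correct and follows essentially the same approach as the paper: the paper's argument, given in the paragraph immediately preceding the Proposition, is simply that since $\mathfrak m_0(1)$ is proportional to the unit with the \emph{same} scalar $\lambda$ on every object of ${\bf L}_\lambda$, re-setting $\mathfrak m_0=0$ preserves the $A_\infty$ relations. You have spelled out the mechanism behind that implication in full --- the $\Sigma_{\geq 1}/\Sigma_0$ split and the case analysis via strict unitality --- which the paper leaves implicit, but there is no difference in strategy.
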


\begin{rem}
The $A_\infty$ category $\Fuk_0  = \Fuk^0(T^*M)$
plays a crucial role in the main results of  the present paper.  The class of objects
of $\Fuk^0(T^*M)$
includes all compact exact Lagrangian branes and a priori contains more. 
We postpone further refined study of $\Fuk^0(T^*M)$, after twisting the branes by
local systems, elsewhere.
\end{rem}

	%\bibliographystyle{amsalpha}
	%\bibliography{geometry-oh}
	
\providecommand{\bysame}{\leavevmode\hbox to3em{\hrulefill}\thinspace}
\providecommand{\MR}{\relax\ifhmode\unskip\space\fi MR }
% \MRhref is called by the amsart/book/proc definition of \MR.
\providecommand{\MRhref}[2]{%
  \href{http://www.ams.org/mathscinet-getitem?mr=#1}{#2}
}
\providecommand{\href}[2]{#2}

\end{document}